\numberwithin{equation}{section}
\theoremstyle{plain}
\newtheorem{theorem}{Theorem}[section]
\newtheorem{lemma}[theorem]{Lemma}
\newtheorem{corollary}[theorem]{Corollary}
\theoremstyle{definition}
 \DeclareMathOperator{\image}{Im}
\newcommand{\bb}[1]{\mathbb{#1}}
   \newcommand{\A}{\bb{A}}
   \newcommand{\B}{\bb{B}}
   \newcommand{\LL}{\bb{L}}  
   \newcommand{\M}{\bb{M}}    
   \newcommand{\C}{\bb{C}} 
\newcommand{\cS}{\mathcal{S}}
\newcommand{\CC}{\mathscr{C}}
\newcommand{\sL}{\mathscr{L}}
\newcommand{\V}{\mathscr{V}}
\newcommand{\al}{\alpha}
\newcommand{\ka}{\kappa}
\newcommand{\lam}{\lambda}
\newcommand{\om}{\omega}
\newcommand{\Om}{\Omega}
\newcommand{\ph}{\varphi}
\newcommand{\sg}{\sigma}     
\newcommand{\thet}{\theta}     
\newcommand{\br}[1]{[\![#1]\!]}  
\newcommand{\epi}{\twoheadrightarrow}
\renewcommand{\ge}{\geqslant}       
\newcommand{\join}{\bigvee}
\renewcommand{\le}{\leqslant}  
\newcommand{\meet}{\bigwedge}
\newcommand{\mono}{\rightarrowtail}
\newcommand{\ov}[1]{\overline{#1}}
\newcommand{\sub}{\subseteq}
\renewcommand{\>}{\rangle}
\begin{document}


\title[Canonical extensions and ultraproducts of polarities]{Canonical extensions and ultraproducts of polarities}

\author[Robert Goldblatt]{Robert Goldblatt}
\email{rob.goldblatt@msor.vuw.ac.nz}
\urladdr{sms.vuw.ac.nz/~rob}
\address{School of Mathematics and Statistics\\
Victoria University of Wellington\\New Zealand}

\thanks{The author thanks Mai Gehrke and Ian Hodkinson for some very helpful comments, information and improvements.}

\dedicatory{In memoriam Bjarni J{\'o}nsson}

\subjclass[2010]{Primary: 00A99; Secondary: 08A40, 06E30.}

\keywords{canonical extension, canonical variety, lattice, completion, lattice-based algebra, MacNeille completion, ultraproduct, polarity, Galois connection, Hanf number}

\begin{abstract}
J{\'o}nsson and Tarski's notion of the  perfect extension of a Boolean algebra with operators has evolved into an extensive theory of canonical extensions of lattice-based algebras. After reviewing this evolution we make  two contributions. First it is shown that the failure of  a variety of algebras to be closed under canonical extensions is witnessed by a particular one of its free algebras.
The size of the set of generators of this algebra can be made a function of a collection of varieties and  is a kind of Hanf number for canonical closure. Secondly we study the complete lattice of stable subsets of a polarity structure, and show that if a class of polarities is closed under ultraproducts, then its stable set lattices generate a variety that is closed under canonical extensions. This generalises an earlier result of the author about generation of canonically closed varieties of Boolean algebras with operators, which was in turn an abstraction of the result that a first-order definable class of Kripke frames determines a modal logic that is valid in its so-called canonical frames.
\end{abstract}

\maketitle


\section{A biography of canonical extension}\label{sec1}

In  a 2007 conference abstract \cite{jons:cano07}, Bjarni J{\'o}nsson described `an acorn from which a mighty oak has grown'. He was referring to a theorem communicated to him in 1946 by Tarski, stating that every relation algebra can be extended to a complete and atomic relation algebra. It was part of Tarski's project to modernise the Nineteenth Century theory of relations. But J{\'o}nsson realised that Tarski's construction applied to  other kinds of algebra, and this led to their celebrated two-part work \cite{jons:bool51,jons:bool52} on Boolean algebras with operators (BAO's), these operators being additional finite-join preserving maps.  Paper \cite{jons:bool51} presented the general theory of BAO's with application to closure, cylindric and projective algebras; while \cite{jons:bool52} was entirely devoted to relation algebras.

The Extension Theorem of \cite{jons:bool51} showed that  each BAO  $\B$ has an extension $\B^\sg$, which they called the `perfect' extension of $\B$, and which is complete and atomic with its additional operators being completely join preserving. Moreover, any BAO meeting the latter description is isomorphic to the `complex algebra' of all subsets of some relational structure, with the $n$-ary operators of the complex algebra  defined out of  the $n+1$-ary relations of the structure. From this followed a Representation Theorem: any BAO $\B$ has an associated relational structure $\B_+$ such that $\B$ is embeddable into the complex algebra of all subsets of $\B_+$. These results were first announced in \cite{jons:bool48}.

The notion of perfect extension was built on Stone's representation of a Boolean algebra as a field of sets \cite{ston:repr36}. But J{\'o}nsson and Tarski took a more refined approach to this, defining $\B^\sg$ abstractly to be any complete and atomic extension of $\B$ that satisfies certain lattice-theoretic axioms of separation and compactness, and proving  the  \emph{uniqueness} of such an extension up to a unique isomorphism over $\B$. Stone's theory then facilitated the proof of \emph{existence} of $\B^\sg$ by providing a perfect extension of the Boolean reduct of $\B$.
A significant part of the analysis of \cite{jons:bool51} was devoted to showing how each operator $f$ on $\B$ has a \emph{canonical extension} to a completely join preserving operation $f^\sg$ on $\B^\sg$.

Perfect extensions were used in the theory of cylindric algebras \cite{henk:cyli71}, where they were called \emph{canonical embedding algebras}. That  name was later used by the present author in some papers \cite{gold:vari89,gold:clos91,gold:elem95}, about dualities between BAO's and relational structures,  which were inspired by interim developments  in the study of modal logics. The algebraic models of a modal logic form a variety of BAO's that have a single unary operator interpreting the possibility modality. $\B_+$ was dubbed the \emph{canonical structure of} $\B$ in \cite{gold:vari89}, because when $\B$ is a Lindenbaum algebra of a modal logic (i.e.\ a freely  generated algebraic model), then $\B_+$ is isomorphic to a \emph{canonical Kripke frame} of the logic, a kind of structure  that had been developed by  modal logicians 
\cite{baya:quas59,cres:henk67,lemm:inte66,maki:comp66} by taking  points of a frame to be maximally consistent sets of formulas in the sense of Henkin \cite{henk:comp49}. The naming of such structures as `canonical' is due to Segerberg \cite{sege:deci68,sege:essa71}. Many logics were shown to be characterised by Kripke frames satisfying particular conditions, by showing that their canonical frames $\B_+$ satisfy those conditions and hence validate the logic. Logics for which this holds were called \emph{canonical}. The property implies that $\B^\sg$ is an algebraic model for the logic and therefore belongs to the associated variety.  Consequently, a class of BAO's was called \emph{canonical} in \cite{gold:vari89} when it is closed under canonical embedding algebras.

The question of which varieties are canonical is the question of which sets of equations are preserved by the $\B^\sg$ construction. This was addressed for individual equations already in \cite{jons:bool51}, where it was shown that all equations between strictly positive terms (ones not involving Boolean complements) are preserved. Also it was demonstrated that a number of such equational properties of a complex algebra corresponded to elementary properties of its underlying relational structure.  For example it was deduced from these results that if $\B$ is a closure algebra in the sense of McKinsey and Tarski \cite{mcki:alge44}, then $\B_+$ is a preordered set. This implies that the canonical frames of the modal logic S4 are  preordered. That yields a completeness theorem for S4 with respect to validity in preordered frames. Other results from \cite{jons:bool51} imply a corresponding analysis for other well-known logics, including S5, T and B.
But the investigation of canonicity for modal logics proceeded independently of the work of J{\'o}nsson and Tarski once the Kripke semantics emerged fifteen years or so later. Sahlqvist \cite{sahl:comp75} gave a syntactic definition of a wide class of modal formulas each of which has a first-order definable class of Kripke frames that includes the canonical frames of the logic axiomatised by that formula.
de Riijke and Venema \cite{deri:sahl95} generalised this formalism algebraically, defining  \emph{Sahlqvist equations} for BAO's of any  type and showing that they specify varieties that are canonical.

Note that if a variety is canonical then it is axiomatised by a set $E$ of equations which are preserved by canonical extensions when all taken together, but it does not follow that each member of $E$ is preserved by canonical extensions on its own. In fact there exist canonical varieties that are only \emph{barely canonical} in the sense that any axiomatisation of them must involve infinitely many axioms that are individually not canonical. The first examples of varieties with this property  were given by Hodkinson and Venema \cite{hodk:cano05}, and include the variety \textbf{RRA} of representable relation algebras. Many more can be found in  \cite{gold:mcki07,buli:bare13,kiko:dich15}. 

Four decades after the initial work on BAO's, J{\'o}nsson returned to the subject and began a productive collaboration with Mai Gehrke, who was a postdoc at Vanderbilt University during 1988--1990.
The $\B^\sg$ notation was introduced for what was now called the \emph{canonical extension} of $\B$. An elegant algebraic demonstration of the canonicity of Sahlqvist equations was given and  the functoriality of the action $f\mapsto f^\sg$ of canonical extension on various maps $f$ was explored \cite{jons:surv93,jons:cano94,jons:pres95}. The theory was lifted in \cite{gehr:boun94} from Boolean algebras to bounded distributive lattices with operators (DLO's), replacing Stone duality by Priestley duality \cite{prie:repr70} and atoms by completely join-irreducible elements, and showing that every variety of DLO's  is canonical. A sequel paper
\cite{gehr:mono00} enlarged the class of algebras to bounded distributive lattices expanded by additional operations that are monotone (isotone or antitone) in each coordinate. Canonical extension was shown to define a functor on the category of homomorphisms between these monotone bounded distributive lattice expansions (DLM's) that preserves and reflects injections and surjections. A corollary is that if a class of DLM's is canonical and closed under direct products, then the variety it generates is canonical. 

A significant innovation in \cite{gehr:mono00} was the use of tthe Boolean products of Burris and Werner \cite{burr:shea79}, and a demonstration that 
\begin{enumerate}
\item
if $\bb D$ is a Boolean product of a family $\{\bb D_i : i\in I\}$ of DLM's, then the canonical extension $\bb D^\sg$  is isomorphic to the direct product $\prod_I\bb D_i^\sg$ of the canonical extensions of its factors.
\end{enumerate}
 That result leads to a proof that 

\begin{enumerate}
\item[(2)]  
if a class of DLM's is canonical and closed under \emph{ultra}products, then the variety it generates is canonical. 
\end{enumerate}
This  was viewed as an `algebraic counterpart' to a theorem of the present author from \cite{gold:vari89}, which was itself abstracted from a theorem about canonicity of modal logics due to Fine \cite[Theorem 3]{fine:conn75}. Fine's theorem states that if a class of Kripke frames is definable in first-order logic, then the modal logic that it determines is canonical.
In \cite[Theorem 3.6.7]{gold:vari89} this result was generalised to the statement that
\begin{enumerate}
\item[(3)]    \label{myFine}
if a class $\cS$ of relational structures (of any given type) is closed under ultraproducts, then the complex algebras of the members of $\cS$  generate a variety of BAO's that is canonical.    
\end{enumerate}
A second proof of this result was given in \cite{gold:clos91}, and some strengthenings of it in \cite{gold:elem95} (see also \cite{gold:fine16} for a review of this work).
One objective of the present paper is to generalise (3) further to varieties of lattices in place of BAO's.

Now in the Gehrke--J{\'o}nsson papers \cite{gehr:boun94, gehr:mono00}, the definition of the canonical extension $f^\sg$ of a map was essentially the same as in the J{\'o}nsson--Tarski original \cite{jons:bool51}. It has the limitation that $f^\sg$ is only guaranteed to be an extension of $f$ when $f$ is  isotone. Gehrke devised a new topologically motivated definition of $f^\sg$ that extends an arbitrary $f$ and agrees with the old definition for isotone $f$. It was used in a third joint paper with J{\'o}nsson on distributive lattices \cite{gehr:boun04}, now focused on bounded distributive lattice expansions (DLE's), i.e.\ arbitrary algebras based on bounded distributive lattices. Result (1) above was shown to hold for DLE's, and  was used to prove that the canonical extension
$(\prod_I{\bb D_i})^\sg $ of a direct product of DLE's is isomorphic to the direct product
$\prod_{U\in \beta I} (\prod_U{\bb D_i})^\sg $
of the canonical extensions of  all ultraproducts  $\prod_U{\bb D_i}$ of the algebras $\bb D_i$ by ultrafilters $U$ (here $\beta I$ is the set of all ultrafilters on $I$). That led to a version of result (2) above for DLE's, and then to  another proof of result (3) for BAO's.  As well as providing significant information about canonical extensions of direct products, this new proof of (3) works more on the algebraic side of the duality between BAO's and relational structures, and despite being about relational structures in an essential way, does not require any knowledge of what category they form, i.e.\ what are the morphisms of relational structures that are dual to BAO-homomorphisms. We will make use of this proof strategy below in generalising result (3).

The new definition of $f^\sg$ was used by
Gehrke and Harding \cite{gehr:boun01} to introduce a notion of canonical extension for bounded lattice expansions (LE's), i.e.\   bounded lattices with arbitrary additional operations. In an earlier paper \cite{hard:cano98}, Harding had constructed canonical extensions of lattices by using an embedding of a bounded lattice into the lattice of stable elements of a Galois connection on a Boolean algebra, and invoking the canonical extension of the Boolean algebra. In \cite{gehr:boun01}  a more direct approach was taken, giving  a new axiomatic definition of $\LL^\sg$ for a bounded lattice $\LL$, obtained by replacing the separation property by a density condition that we will describe later. 
Canonical extension was shown to yield a functor on the category of monotone LE's that preserves and reflects monomorphisms and epimorphisms.  Result (1) was proven for LE's, and result (2) for monotone LE's. The existence of $\LL^\sg$ was established by constructing it as the lattice of stable subsets of a Galois connection induced by a polarity between filters and ideals of $\LL$. Other constructions are possible, based on representations of lattices by Urquhart \cite{urqu:topo78}, Hartung \cite{hart:topo92} and others. The relationships between several such incarnations of $\LL^\sg$ have been worked out in \cite{crai:cano12,crai:fres13,crai:reco14}.

The relationship between canonical extensions and completions in the sense of MacNeille \cite{macn:part37} was clarified by Gehrke, Harding and Venema \cite{gehr:macn06} using a  novel and elegant proof method, based on ideas from nonstandard analysis, to show that if $\LL$ is a monotone LE, then $\LL^\sg$ has a complete lattice embedding into the MacNeille completion of any sufficiently saturated elementary extension of $\LL$. Hence any equation, and indeed any universal sentence, about monotone LE's that is preserved by MacNeille completions must be preserved by canonical extensions. We will be applying this embedding from \cite{gehr:macn06} in generalising result (3) below.

Having generalised canonical extensions by dispensing with  Boolean complements and then distributivity, it remained to dispense with  the lattice structure itself. Canonical extensions of posets and monotonic poset expansions were defined by Dunn, Gehrke and Palmigiano \cite{dunn:cano05} and studied further in \cite{gehr:cano08,gehr:delt13,suzu:cano11,mort:cano14}. They have been applied to develop relational models of substructural and other kinds of logic, including linear logic, relevant logic, and the Lambek calculus \cite{gehr:gene06,alme:cano09,gehr:cano10,suzu:cano11a,cher:gene12,coum:rela14}.

Over the last dozen years  there have been many other topological, algebraic, categorical and logical studies of, or involving, canonical extensions 
\cite{bezh:prof06,bezh:topo08,coum:gene12,dave:bool07,dave:topo11,dave:cano12,gehr:cano14,gehr:dist14,gehr:sahl05,gehr:cano07,gehr:dual07,gehr:view09,gehr:cano11,gonz:topo16,gool:dua12,gouv:prof13,gouv:cano14,hard:prof06,havi:cano06,mosh:topo14,mosh:topo14a,vosm:logi10}.
The acorn has become a forest.

The present paper makes two contributions to the theory of canonical extensions of lattice-ordered algebras. 
Here is a summary of its contents. Sections 2 and 3 review definitions and results about canonical extensions that we will use.
Section 4 contains the first contribution. For any non-canonical variety $\V$ of lattice-based algebras it defines $\ka_\V$ to be the least cardinal $\ka$ such that the canonical extension $\LL_\ka(\V)^\sg$ is not in $\V$, where $\LL_\ka(\V)$ is the free algebra in $\V$ on $\ka$-many generators. Examples are given of varieties of modal algebras for which $\ka_\V$
is 0, 1 or $\om$. Known results about the varieties of modular ortholattices and of orthomodular lattices imply that they have  $\ka_\V=3$.
 It is also shown that for any collection $\Om$ of varieties of a given signature there exists a cardinal $\ka_\Om$ such that for any variety $\V$ from $\Om$, if the canonical extension of $\LL_{\ka_\Om}(\V)$ belongs to  $\V$, then so does the canonical extension of every other member of $\V$. Moreover $\ka_\Om$ is the least cardinal with this property and can be thought of as  an analogue for canonical closure of the  notion of the Hanf number of a formal logic. We  show further that, assuming the operation $\LL\mapsto\LL^\sg$ preserves certain homomorphisms,  the role of 
$\LL_{\ka_\Om}$ can also be fulfilled by the $\ka_\Om$-th direct power
$\LL_\om\!^{\ka_\Om}$ of the free algebra on denumerably many generators (see Theorem \ref{equivalences}).

Sections 5--7 contain the second contribution, which is an adaptation of the result (3) above about generating a canonical variety from an ultraproducts-closed class of relational structures. We work with the notion of a \emph{polarity} as a structure $P=(X,Y,R)$ comprising a binary relation $R\sub X\times Y$. This relation induces a Galois connection between the powersets of $X$ and $Y$, leading to a notion of a \emph{stable} subset of $X$.  The set $P^+$ of stable subsets is a complete lattice. (An instance of this construction was used in \cite{gehr:boun01} to define a canonical extension of any bounded lattice, as already mentioned.) In Section 5 we define the ultraproduct $\prod_U P_i$ of a family of polarities $P_i$ and show that the ultraproduct $\prod_U(P_i^+)$ of the stable set lattices $P_i^+$ has an embedding into the lattice 
$( \prod_UP_i)^+ $ of the ultraproduct of the $P_i$'s.
The ultra\emph{power} case of this yields  an embedding  of any ultrapower $(P^+)^U$ of a stable set lattice into  the lattice $(P^U)^+$. 
In Section 6  we show that this embedding is a MacNeille completion of  $(P^+)^U$. Combining this with the result from \cite{gehr:macn06} on the embedding of canonical extensions into MaNeille completions, we obtain the conclusion that for any polarity $P$ there exists an ultrafilter $U$ such that the canonical extension $(P^+)^\sg$ of the stable set lattice of $P$ embeds into the stable set lattice $(P^U)^+$ of the  ultrapower  $P^U$.

In  Section 7 these results are combined with further analysis to show that if $\cS$ is any class of polarities that is closed under ultraproducts, then the variety of lattices generated by $\cS^+=\{P^+:P\in\cS\}$ is closed under canonical extensions.  An axiomatisation of the principles required for the proof is given, so that it can be applied to other situations. We also explain why the variety generated by $\cS^+$ is equal to the variety generated by
$\cS_\mathrm{el}^{\ +}$, where $\cS_\mathrm{el}$ is the smallest elementary class containing $\cS$.

\section{Canonical extensions}

We take all lattices to be bounded, and use the signature $\land,\lor,0,1$ to describe them.  We use $\le$ for the partial order of a lattice, and the symbols $\join$ and $\meet$ for the join and meet of a set of elements, when these exist. All lattice homomorphisms are assumed to preserve 0 and 1. A surjective homomorphism may be  called an \emph{epimorphism} and an injective homomorphism a \emph{monomorphism}. We often use the standard symbols
$\epi$ and  $\mono$ for epimorphisms and monomorphisms respectively.

A function $f\colon\LL\to\M$ between lattices is called \emph{isotone} if $a\le b$ implies $fa\le fb$, and  \emph{antitone} if $a\le b$ implies $fb\le fa$. It is a \emph{lattice embedding}  if it is a monomorphism of bounded lattices. A lattice embedding is always an \emph{order embedding}, i.e.\ has 
$a\le b$ iff $fa\le fb$. The notation $f[S]$ will be used for the image $\{fa:a\in S\}$ of a set $S$ under function $f$.

A \emph{completion} of lattice $\LL$ is a pair $(e,\C)$ with $\C$ a complete lattice and $e\colon\LL\mono\C$  a lattice embedding.
An element of $\C$ is called \emph{open} if it is a join of elements from the image $e[\LL]$ of $\LL$ and \emph{closed} if it is a meet of elements from $e[\LL]$. Note that members of $e[\LL]$ are both open and closed. The set of open elements of the completion is denoted $O(\C)$, or just $O$ if the embedding is understood. The  set of closed elements is denoted $K(\C)$, or just $K$.

A completion $(e,\C)$ of $\LL$ is \emph{dense} if $K(\C)$ is join-dense and $O(\C)$ is meet-dense in $\C$, i.e.\ if  every member of $\C$ is both a join of closed elements and a meet of open elements. Note that if $\LL$ is finite, then each of the join-density or the meet-density here is enough to make $e[\LL]=\C$ and so make $\LL$  isomorphic to $\C$ under $e$.

A completion is \emph{compact} if for any set $S$ of closed elements and any set $T$ of open elements such that $\meet S\le\join T$,  there are finite sets $S'\sub S$ and $T'\sub T$ with $\meet S'\le\join T'$. An equivalent formulation of this condition that we will be using is that for any subsets $S$ and $T$ of $\LL$ such that $\meet e[S]\le\join e[T]$ there are finite sets $S'\sub S$ and $T'\sub T$ with $\meet S'\le\join T'$.


A \emph{canonical extension} of bounded lattice $\LL$ is a completion $(e,\LL^\sg)$  of $\LL$ that is dense and compact. It is shown in \cite{gehr:boun01} that a dense and compact completion exists for any $\LL$, and that any two such completions are isomorphic by a unique isomorphism commuting with the embeddings of $\LL$. 

It is convenient to assume that $\LL$ is a sublattice of $\LL^\sg$, with the embedding $e\colon\LL\to\LL^\sg$ being the inclusion function $e(a)=a$.
In particular this assumption is made in taking an arbitrary map $f\colon\LL\to \M$ between lattices, viewing it as a map $f\colon\LL\to \M^\sg$, and then lifting it to a map $f^\sg\colon\LL^\sg\to\M^\sg$, the \emph{canonical extension of $f$}, by using the lattice completeness of $\M^\sg$ to define, for all $x\in\LL^\sg$,
\begin{align*}
f^\sg x &=\join\big\{\meet \{fa:a\in\LL\text{ and }p\le a\le q\}:K(\LL^\sg)\ni p\le x\le q\in O(\LL^\sg)\big\}.
\end{align*}
Then $f^\sg$ extends $f$. If $f$ is isotone,  $f^\sg$ is also isotone and has the simpler description
\begin{equation}  \label{canextf}
f^\sg x =\join\big\{\meet \{fa:p\le a\in\LL\}:x\ge p\in K(\LL^\sg)\}.
\end{equation}

\begin{lemma}\cite{gehr:boun01}  \label{presfsg}
If $f:\LL\to\M$ is a bounded lattice homomorphism, then $f^\sg$ is a complete lattice homomorphism, i.e.\ $f^\sg$ preserves all joins and meets. Moreover, $f^\sg$ is injective or surjective if, and only if, $f$ has the same property.
\qed
\end{lemma}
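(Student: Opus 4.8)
The plan is to use that a homomorphism $f$ is isotone, so that $f^\sg$ is isotone and given by \eqref{canextf}, and to reduce everything to the behaviour of $f^\sg$ on closed and open elements. Two preliminary formulas carry the whole argument. Evaluating \eqref{canextf} at a closed $p\in K(\LL^\sg)$, the term indexed by $p$ itself dominates the rest, so $f^\sg p=\meet\{fa:p\le a\in\LL\}$, which, being a meet of elements of $\M$, is a closed element of $\M^\sg$. Dually I would show that for open $q\in O(\LL^\sg)$ one has $f^\sg q=\join\{fb:\LL\ni b\le q\}$, an open element of $\M^\sg$: the inequality ``$\ge$'' comes from taking $p=b$ in \eqref{canextf}, and for ``$\le$'' one applies compactness to $p=\meet\{a\in\LL:p\le a\}\le\join\{b\in\LL:b\le q\}=q$ (for closed $p\le q$) to obtain finitely many $a_j\ge p$ and $b_l\le q$ in $\LL$ with $\bigwedge_j a_j\le\bigvee_l b_l$, and then uses that $f$ preserves these finite meets and joins: $f^\sg p\le f(\bigwedge_j a_j)\le f(\bigvee_l b_l)=\bigvee_l fb_l\le\join\{fb:\LL\ni b\le q\}$.

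To prove $f^\sg$ preserves an arbitrary join $x=\join_i x_i$, the inequality ``$\ge$'' is just isotonicity, and for ``$\le$'' it suffices by density to show $f^\sg p\le\join_i f^\sg x_i$ for every closed $p\le x$; since the open elements of $\M^\sg$ are meet-dense, it further suffices to show $f^\sg p\le q'$ for every open $q'$ of $\M^\sg$ with $\join_i f^\sg x_i\le q'$. Here I would put $U=\{a\in\LL:fa\le q'\}$, an ideal of $\LL$, and $u=\join U$ in $\LL^\sg$. A first use of compactness, applied to $f^\sg p'=\meet\{fa:p'\le a\in\LL\}\le f^\sg x_i\le q'$ for closed $p'\le x_i$, puts each such $p'$ below some member of $U$, so $x_i\le u$ for all $i$ and hence $p\le u$. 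A second use of compactness, applied to $p\le u=\join U$, produces a finite join $a^*=a_1\lor\dots\lor a_n$ with all $a_j\in U$ and $p\le a^*\in\LL$; then $fa^*=\bigvee_j fa_j\le q'$, so $f^\sg p\le fa^*\le q'$. Preservation of arbitrary meets is dual, using the open-element formula of the first paragraph. In particular $f^\sg$ preserves finite meets and joins and the bounds, hence is a (complete) bounded lattice homomorphism.

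For the second assertion: ``$f^\sg$ injective $\Rightarrow$ $f$ injective'' is immediate since $f^\sg$ extends $f$. Conversely, if $f$ is injective it is an order embedding, and since $f^\sg$ is a bounded lattice homomorphism it suffices to show $f^\sg$ reflects order; given $f^\sg x\le f^\sg y$, density reduces this to $p\le q$ for closed $p\le x$ and open $q\ge y$, whereupon $f^\sg p\le f^\sg x\le f^\sg y\le f^\sg q$ combined with the two preliminary formulas, compactness, and the order-embedding property of $f$ gives $p\le q$. For surjectivity: if $f$ is surjective then $f^\sg[\LL^\sg]$ is closed under arbitrary meets and joins and contains $f[\LL]=\M$, and since by density every element of $\M^\sg$ is a join of meets of elements of $\M$, we get $f^\sg[\LL^\sg]=\M^\sg$. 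Conversely, suppose $f^\sg$ is surjective and $m\in\M$; picking $x$ with $f^\sg x=m$, the closed-element formula and compactness give, for each closed $p\le x$, some $a_p\in\LL$ with $p\le a_p$ and $fa_p\le m$, so the open element $\tilde x=\join\{a_p:p\le x\}$ satisfies $x\le\tilde x$ and (as $f^\sg$ preserves joins) $f^\sg\tilde x=m$. Being open, $\tilde x$ yields $m=f^\sg\tilde x=\join\{fb:\LL\ni b\le\tilde x\}$ with $m$ closed, so one last application of compactness gives $b_1,\dots,b_k\le\tilde x$ in $\LL$ with $m\le\bigvee_l fb_l$; since also $\bigvee_l fb_l\le m$, we conclude $m=f(\bigvee_l b_l)\in f[\LL]$, so $f$ is surjective.

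The routine parts are the two preliminary formulas and the easy halves of the equivalences; the main obstacle is the preservation of \emph{arbitrary} joins and meets together with the reflection of surjectivity. Each of these rests on a two-stage compactness argument that succeeds precisely because $f$ is a homomorphism, so that a finite sub-meet or sub-join of elements of $f[\LL]$ stays in $f[\LL]$ (and, when $f$ is injective, reflects order). Keeping track of which of $\LL^\sg,\M^\sg$ one is computing in, and invoking density on the correct side, is where the care is needed.
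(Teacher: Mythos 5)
The paper does not actually prove this lemma---it is quoted from Gehrke and Harding \cite{gehr:boun01} and stated with a bare \verb|\qed|---so your proposal has to be judged on its own terms, as a reconstruction from the abstract axioms of density and compactness together with \eqref{canextf}. On those terms it is essentially correct and is essentially the standard argument: the identifications $f^\sg p=\meet\{fa:p\le a\in\LL\}$ for closed $p$ and $f^\sg q=\join\{fb:\LL\ni b\le q\}$ for open $q$ are right, the two-stage compactness argument through the ideal $U=\{a\in\LL:fa\le q'\}$ correctly yields preservation of arbitrary joins, and all four directions of the injectivity/surjectivity equivalences check out, including the less obvious ``$f^\sg$ surjective $\Rightarrow f$ surjective'' via the open element $\tilde x$.

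The one sentence that owes more is ``Preservation of arbitrary meets is dual.'' The definition \eqref{canextf} is not self-dual: it presents $f^\sg x$ as $\join\{f^\sg p: x\ge p\in K(\LL^\sg)\}$ and gives no a priori dual presentation $\meet\{f^\sg q: x\le q\in O(\LL^\sg)\}$, so the meet case cannot be obtained by mechanically interchanging $\join$ and $\meet$ in your join argument. It does go through, and it does use your open-element formula, but it must be set up from the other side. Given $x=\meet_i x_i$, join-density in $\M^\sg$ reduces the claim $\meet_i f^\sg x_i\le f^\sg x$ to showing $c\le f^\sg x$ for each closed $c\le\meet_i f^\sg x_i$; for such a $c$ form the \emph{filter} $F=\{a\in\LL:c\le fa\}$ and put $w=\meet F\in K(\LL^\sg)$. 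For every $i$ and every open $q\ge x_i$ one has $c\le f^\sg q=\join\{fb:\LL\ni b\le q\}$, and compactness in $\M^\sg$ (writing $c=\meet\{m\in\M:c\le m\}$) together with preservation of finite joins by $f$ produces some $b^*\in F$ with $b^*\le q$; hence $w\le q$ for all open $q\ge x_i$, so $w\le x_i$ by meet-density in $\LL^\sg$, and thus $w\le x$. A further compactness step shows $\{a\in\LL:w\le a\}=F$, whence $c\le\meet\{fa:a\in F\}=f^\sg w\le f^\sg x$. (Alternatively, first prove that $f^\sg x$ equals the dual expression $\meet\{\join\{fb:\LL\ni b\le q\}:x\le q\in O(\LL^\sg)\}$---the smoothness of homomorphisms---and then dualise literally.) Either way this is a genuine extra step, of the same two-stage compactness type you use elsewhere, and a complete write-up should record it.
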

Now if $f\colon\LL^n\to\LL$ is an $n$-ary operation on $\LL$, then $f^\sg$ is a map from $(\LL^n)^\sg$ to $\LL^\sg$. But $(\LL^n)^\sg$ can be identified with $(\LL^\sg)^n$, since the inclusion embedding $\LL^n\to(\LL^\sg)^n$ is dense and compact, so this allows $f^\sg$ to be regarded as an $n$-ary operation on $\LL^\sg$.

A \emph{lattice-based algebra} is an algebra $\LL=(\LL_0,\{f_j:j\in J\})$ comprising a bounded lattice $\LL_0$ with additional finitary operations $f_j$ on $\LL_0$. Then the canonical extension of $\LL$ is defined to be the algebra  $\LL^\sg=(\LL_0^\sg,\{f_j^\sg:j\in J\})$ of the same similarity type. 
A class of lattice-based algebras of a given type is called \emph{canonical} if is closed under canonical extensions, i.e.\ if it  contains $\LL^\sg$ whenever it contains $\LL$.

It has been shown by Vosmaer \cite[3.3.12]{vosm:logi10} that if $h\colon\LL\epi\M$ is a \emph{surjective} homomorphism between lattice-based algebras of the same type, then the surjective $h^\sg\colon\LL^\sg\to\M^\sg$ is also a homomorphism for that type of algebra. We apply this fact to the study of the role of free algebras in canonical closure.
Standard theory tells us that in order for  for a class  of algebras of some type that contains a constant to contain a free algebra on any set of generators,  it suffices that it be closed under isomorphism, subalgebras and direct products, and be nontrivial i.e.\ have a member containing more than one element (e.g.\ \cite[Theorem 4.117]{mcke:alge87}). Since we are dealing with lattices having 0 and 1, any nontrivial variety of  lattice-based algebras possesses all free algebras.

\begin{lemma} \label{canonicalfree}
A variety $\V$ of lattice-based algebras is canonical if, and only if, it contains the canonical extensions of all of its  free algebras. 
\end{lemma}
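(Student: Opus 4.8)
The forward implication needs nothing: every free algebra of $\V$ is a member of $\V$, so if $\V$ is canonical it contains $\LL^\sg$ for each such free algebra. So the plan concentrates on the converse.

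Assume $\V$ contains $\LL_\ka(\V)^\sg$ for every cardinal $\ka$ (equivalently, for every set of free generators), and let $\LL$ be an arbitrary member of $\V$. The idea is to realise $\LL$ as a homomorphic image of a suitable free algebra and then transfer that surjection through the canonical extension functor. Concretely, put $\ka=|\LL|$ (any cardinal for which $\LL$ is $\ka$-generated will do, and $\LL$ certainly generates itself), so that the freeness of $\LL_\ka(\V)$ yields a surjective homomorphism $h\colon\LL_\ka(\V)\epi\LL$ of lattice-based algebras of the given type. Now apply Vosmaer's theorem (\cite[3.3.12]{vosm:logi10}), quoted just before the statement: since $h$ is a \emph{surjective} homomorphism between lattice-based algebras of the same type, $h^\sg\colon\LL_\ka(\V)^\sg\to\LL^\sg$ is again a homomorphism for that type. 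By Lemma \ref{presfsg}, $h^\sg$ is surjective because $h$ is. Hence $\LL^\sg$ is a homomorphic image of $\LL_\ka(\V)^\sg$, which lies in $\V$ by hypothesis; since $\V$ is a variety and therefore closed under homomorphic images, $\LL^\sg\in\V$. As $\LL\in\V$ was arbitrary, $\V$ is canonical.

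There is essentially no hard step here once the two imported facts are in hand: the whole argument is the observation that $(-)^\sg$ turns the canonical presentation $\LL_\ka(\V)\epi\LL$ into a surjection of the same kind of algebra. The only point that requires care is that $h^\sg$ be a homomorphism for the \emph{full} similarity type, i.e.\ that it respect the extra operations $f_j$ and not merely the bounded-lattice structure; this is exactly the content of Vosmaer's surjective-homomorphism result (which in turn relies on the identification of $(\LL^n)^\sg$ with $(\LL^\sg)^n$ recorded earlier), and it is why the hypothesis is phrased for surjections rather than arbitrary homomorphisms. One might remark in passing that this is also where the restriction to varieties (as opposed to, say, quasivarieties) is used, since closure under homomorphic images of $\V$ is invoked at the last step.
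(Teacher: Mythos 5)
Your proof is correct and is essentially the paper's argument run in the direct rather than contrapositive direction: both realise a given algebra as an epimorphic image of a free algebra, lift the surjection via Vosmaer's result and Lemma \ref{presfsg}, and invoke closure of $\V$ under homomorphic images. The only point you gloss over is the existence of the free algebra $\LL_\ka(\V)$, which requires $\V$ to be nontrivial; the paper secures this by arguing contrapositively (a witness $\M$ with $\M^\sg\notin\V$ must be infinite), while in your direct version one should note that a trivial variety is canonical outright since all its members are finite.
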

\begin{proof}
If $\V$ is canonical, it is immediate that it contains the canonical extension of any of its free algebras. For the converse, if 
$\V$ is not canonical then there exists an $\M\in\V$ with $\M^\sg\notin \V$. Thus $\M\not\cong\M^\sg$, so $\M$ must be infinite. Hence $\V$ is a non-trivial variety, so it has all free algebras. In particular there is a  free algebra $\LL$ in $\V$ with an epimorphism $h\colon\LL\epi\M$. Then $h^\sg$ is an epimorphism from $\LL^\sg$ onto $\M^\sg$, as  noted above.  Since $\M^\sg\notin \V$, it follows that $\LL^\sg\notin \V$ as $ \V$ is closed under homomorphic images.
\end{proof}
Note that the free algebra $\LL$ in this proof can be assumed to have infinitely many generators. So for a variety to be canonical it is sufficient that it contains the canonical extensions of all of its infinitely generated free algebras.

\section{Products of ultraproducts}

We  review the definition of ultraproducts. If $f$ and $g$ are functions with the same domain $I$, we denote by $\br{f=g}$ their equaliser set $\{i\in I:f(i)=g(i)\}$.
Given a set $\{\A_i:i\in I\}$ of algebras of the same type, and an ultrafilter $U$ on $I$, define a relation $\sim_U$ on the product algebra $\prod_I\A_i$ by putting $f\sim_U g$ iff $\br{f=g}\in U$.  Then $\sim_U$ is a congruence relation on $\prod_I\A_i$, and the  resulting quotient algebra of $\prod_I\A_i$ is called the \emph{ultraproduct} of  $\{\A_i:i\in I\}$ with respect to $U$, denoted $\prod_U\A_i$. The elements of the ultraproduct are the equivalence classes $f^U=\{g\in\prod_I A_i:f\sim_U g\}$ with $f\in\prod_IA_i$. The map $f\mapsto f^U$ is an epimorphism from $\prod_I\A_i$ to $\prod_U\A_i$.
When all the factors $\A_i$ are equal to a single algebra $\A$, then the ultraproduct is called an \emph{ultrapower} of $\A$, denoted $\A^U$.

An algebra $\A$ is a \emph{Boolean product} of the algebras $\{\A_i:i\in I\}$ if
 $\A$ is a subdirect product of the $\A_i$'s and 
there is a Boolean space topology on $I$ such that $\br{f=g}$ is clopen for all $f,g\in \A$; and for all such $f$ and $g$ and any clopen set $N\sub I$, the function $f\restriction_N{\cup}\, g\restriction_{I- N}$ belongs to $\A$.

Given a set of algebras  $\{\A_i:i\in I\}$, let $\beta I$ be the set of all ultrafilters on $I$, with the Stone space topology. Then the product algebra $\prod_I\A_i$ can be represented as a Boolean product of the family $\{\prod_U\A_i:U\in \beta I\}$ of ultraproducts. The product of the projections  
$\prod_I\A_i\twoheadrightarrow \prod_U\A_i$, i.e.\ the homomorphism
\begin{equation}\textstyle     \label{bprodrep}
\thet\colon\prod_I\A_i  \longrightarrow  \underset{U\in \beta I}{\prod} (\prod_U\A_i)
\end{equation}
 defined by $\thet(f)(U)=f^U$, maps $\prod_I\A_i$ isomorphically onto a Boolean product of the $\prod_U\A_i$'s. This is proved in \cite[Theorem 2.1]{burr:shea79} (which actually gives the more general result that any reduced product of the $\A_i$'s has such a Boolean product representation).

Now it was shown by Gehrke and Harding \cite[Lemma 6.7]{gehr:boun01} that if a lattice-based algebra $\LL$ is a Boolean product of a collection $\{\LL_i:i\in I\}$, then its canonical extension $\LL^\sg$ is isomorphic to the product $\prod_I\LL_i^\sg$ of the canonical extensions of the factors $\LL_i$. Combining this with the  Boolean product representation of a  direct product produces the following result, given by Gehrke and J\'onsson \cite[3.19]{gehr:boun04} for distributive lattice expansions.

\begin{theorem}  \label{boolrepsig}
For any set $\{\LL_i:i\in I\}$ of lattice-based algebras of the same type, there is an isomorphism
\begin{equation}\textstyle
\big(\prod_I\LL_i\big)^\sg \cong \underset{U\in \beta I}{\prod}\big(\prod_U\LL_i\big)^\sg
\end{equation}
between the canonical extension  of the product of the $\LL_i$'s and the product
of the canonical extensions of all the ultraproducts of the $\LL_i$'s.
\end{theorem}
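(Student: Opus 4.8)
The plan is to derive the isomorphism by composing the two results recalled immediately above: the Boolean product representation \eqref{bprodrep} of a direct product from \cite[Theorem 2.1]{burr:shea79}, and the description of the canonical extension of a Boolean product from \cite[Lemma 6.7]{gehr:boun01}. No new construction is required; the statement is essentially the concatenation of these facts, specialised to the lattice-based type under consideration.

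First I would apply the Burris--Werner representation to the family $\{\LL_i:i\in I\}$, viewed as algebras of the given lattice-based type. This produces the homomorphism $\thet\colon\prod_I\LL_i\to\prod_{U\in\beta I}(\prod_U\LL_i)$ with $\thet(f)(U)=f^U$, which is injective and carries $\prod_I\LL_i$ \emph{isomorphically onto a Boolean product} of the family $\{\prod_U\LL_i:U\in\beta I\}$ over the Stone space $\beta I$: the image is a subdirect product, each equaliser set $\br{f=g}$ with $f,g$ in the image is clopen in $\beta I$, and the image is closed under patching of its members over clopen subsets of $\beta I$. Identifying $\prod_I\LL_i$ with this image, it is thus a Boolean product of the ultraproducts $\prod_U\LL_i$, indexed by $U\in\beta I$.

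Next I would feed this Boolean product into \cite[Lemma 6.7]{gehr:boun01}, whose hypothesis is precisely that a lattice-based algebra be a Boolean product of a collection of lattice-based algebras of the same type. With index set $\beta I$ and factors $\prod_U\LL_i$, the lemma gives $\big(\prod_I\LL_i\big)^\sg\cong\prod_{U\in\beta I}\big(\prod_U\LL_i\big)^\sg$, which is exactly the asserted isomorphism. (If one prefers to work with the image of $\thet$ rather than identify along it, one uses in addition that canonical extension is unique up to isomorphism over the algebra, so that isomorphic lattice-based algebras have isomorphic canonical extensions.)

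The only point demanding care --- and the closest thing to an obstacle --- is the bookkeeping needed to see that the image of $\thet$ meets the precise definition of \textbf{Boolean product} used in this paper (clopenness of equaliser sets and closure under patching, relative to the Stone topology on $\beta I$), so that \cite[Lemma 6.7]{gehr:boun01} applies verbatim. All of this is contained in the conclusion of \cite[Theorem 2.1]{burr:shea79}, which is stated for arbitrary reduced products and in particular covers the direct-product case, i.e.\ $U$ ranging over all of $\beta I$. One should also note that \cite[Lemma 6.7]{gehr:boun01} is established for bounded lattices with additional operations, which is exactly the class of lattice-based algebras considered here, so no strengthening of the cited results is needed.
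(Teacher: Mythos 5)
Your proposal is correct, and it is exactly the derivation that the paper itself sanctions in the opening sentence of its proof: put $\A_i=\LL_i$ in \eqref{bprodrep} to realise $\prod_I\LL_i$ as a Boolean product of the ultraproducts $\prod_U\LL_i$ over the Stone space $\beta I$, then apply \cite[Lemma 6.7]{gehr:boun01}. The paper, however, does not rest on these citations: after recording this one-line argument it works through a direct, self-contained proof, showing that the map $\thet(f)=\<f^U:U\in\beta I\>$, regarded as a homomorphism into the complete lattice $\prod_{U\in\beta I}(\prod_U\LL_i)^\sg$, is an injective, dense and compact embedding, so that the target is itself a canonical extension of $\prod_I\LL_i$ and the isomorphism follows from uniqueness of canonical extensions. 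What the direct route buys is precisely independence from the point you flag as the one delicate step --- verifying that the image of $\thet$ satisfies the paper's definition of Boolean product (clopen equalisers and closure under patching) so that the Gehrke--Harding lemma applies verbatim --- at the cost of a longer verification; in particular the compactness half requires a finite-intersection-property argument to produce an ultrafilter $U_1$ witnessing $\meet e[S]\not\le\join e[T]$. Both routes are valid; yours is the shorter, citation-based one, and your cautionary remarks about where the bookkeeping lives are exactly right.
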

\begin{proof}
The result follows by putting $\A_i=\LL_i$ in  \eqref{bprodrep} and then applying \cite[6.7]{gehr:boun01}, as just noted. Here we work through a direct proof using the properties of ultrafilters on $I$.

Put $\LL= \prod_I\LL_i$ and let $\thet\colon\LL\to\prod_{\beta I}(\prod_U\LL_i)$ have $\theta(f)=\<f^U:U\in\beta I\>$
as in  \eqref{bprodrep}. Since each algebra 
$(\prod_U\LL_i)^\sg$ is an extension of $\prod_U\LL_i$, we may view $\thet$ as a homomorphism from $\LL$ to 
$\prod_{\beta I}(\prod_U\LL_i)^\sg$. As such we prove that $\thet$ makes $\prod_{\beta I}(\prod_U\LL_i)^\sg$ a canonical extension of $\LL$, from which the Theorem follows by the uniqueness of canonical extensions up to isomorphism.

First note that $\prod_{\beta I}(\prod_U\LL_i)^\sg$ is a product of complete lattices, hence is complete. Indeed the meet and join of any subset $S$ of the product is determined pointwise by meets and joins in each factor:
\begin{equation}  \label{prodmj}
\textstyle
(\meet S)(U)= \meet\{h(U):h\in S\} \text{ and }
(\join S)(U)= \join\{h(U):h\in S\}. 
\end{equation}  
To show that $\theta$ is injective, let $f$ and $g$ be distinct elements of $\LL$. Then their equaliser $\br{f=g}$ is not equal to $I$, so fails to belong to some ultrafilter $U\in\beta I$. Then $f^U\ne g^U$ in $\prod_U\LL_i$, which is enough to ensure that $\thet(f)\ne\thet(g)$ as required for injectivity. Hence $\theta$ is a lattice embedding.

To show that $\theta$ is dense we need to show that any member $h$ of $\prod_{\beta I}(\prod_U\LL_i)^\sg$ is a meet of joins of subsets of $\thet[\LL]$. Given any  $U\in\beta I$ and any $a\in (\prod_U\LL_i)^\sg$, define the function $\widehat{a}$
on $\beta I$ by putting $\widehat{a}(U)=a$ and $\widehat{a}(U')=1$ in $(\prod_{U'}\LL_i)^\sg$ if $U'\ne U$.
Then $h$ is the meet in $\prod_{\beta I}(\prod_U\LL_i)^\sg$ of $\{\widehat{h(U)}:U\in\beta I\}$. So our problem reduces to showing that any member of the form $\widehat{a}$ is a meet of joins of subsets of $\thet[\LL]$.

Now given $a\in (\prod_U\LL_i)^\sg$, by the density of the embedding of 
 $\prod_U\LL_i$ in  $(\prod_U\LL_i)^\sg$ we have that $a=\meet A$ for some $A\sub (\prod_U\LL_i)^\sg$ such that each $q\in A$ is the join of some subset $S_q$ of  $\prod_U\LL_i$. Then $\widehat{a}=\meet\{\widehat{q}:q\in A\}$ in 
$\prod_{\beta I}(\prod_U\LL_i)^\sg$. So the problem about $\widehat{a}$ can be solved by showing that if $q\in A$, then $\widehat{q}$ is the join in $\prod_{\beta I}(\prod_U\LL_i)^\sg$ of some subset of $\thet[\LL]$. 
Note that if $S_q=\emptyset$, so $q=\join\emptyset=0$, we can change $S_q$ to $\{0\}$ without changing $\join S_q$. Thus we can assume that $S_q$ is non-empty, from which it can be shown that
$\widehat{q}=\join\{\widehat{s}:s\in S_q\}$ in $\prod_{\beta I}(\prod_U\LL_i)^\sg$.
 For each $s\in S_q$ there is some $f_s\in\LL$ such that $s=f_s{}^U$. Then for each $J\in U$, define $f_{s,J}$ to be the member of $\LL$ that agrees with $f_s$ at each $i\in J$ and takes the value $1\in\LL_i$ for $i\in I- J$. Since $\br{f_{s,J}=f_s}\supseteq J\in U$, this gives $(f_{s,J})^U=f_s{}^U=s$. Moreover, if $U\ne U'\in\beta I$ then there is some $J\in U$ with 
 $I- J\in U'$, so   $(f_{s,J})^{U'}=1$ in $(\prod_{U'}\LL_i)^\sg$. This shows that $\thet(f_{s,J})=\widehat{s}$ for all $J\in U$. Now put
 $T_q=\{f_{s,J}: s\in S_q \text{ and } J\in U\}\sub\LL$. Then $\join\thet[T_q]=\join\{\widehat{s}:s\in S_q\}=\widehat{q}$, showing that $\widehat{q}$ is the join of a  subset of $\thet[\LL]$, as required to finish the proof that any 
$h\in\prod_{\beta I}(\prod_U\LL_i)^\sg$ is a meet of joins of subsets of $\thet[\LL]$. A dual argument shows that $h$ is also a join of meets of subsets of $\thet[\LL]$, so the embedding $\thet$ is dense.

It remains to show that  $\thet$ is a compact embedding. Take any subsets $S$ and $T$ of $\LL$, let $S^\land_\om$ be the set of all meets of finite subsets of $S$ in $\LL$, and let $T^\vee_\om$ be the set of all joins of finite subsets of $T$. Assume that $f\not\le g$  for all $f\in S^\land_\om$  and $g\in T^\vee_\om$. To prove compactness we must show that
 $\meet e[S]\not\le\join e[T]$.
 For any $f,g\in\LL$, let $\br{f\le g}=\{i\in I:f(i)\le g(i)\}$.
The partial order in any ultraproduct $\prod_U\LL_i$ has $f^U\le g^U$ iff $\br{f\le g}\in U$. Now define
\[
F=\{I- \br{f\le g}:f\in S^\land_\om \text{ and } g\in T^\vee_\om\}.
\]
We show that the collection $F$ of subsets of $I$ has the finite intersection property.
Given $f_1,\dots,f_n\in S^\land_\om$ and $g_1,\dots,g_n\in T^\vee_\om$, put
$f=f_1\land\cdots\land f_n$ and $g=g_1\lor\cdots\lor g_n$. Then $f\in S^\land_\om$ (as a finite meet of finite meets is a finite meet etc.)  and similarly $g\in T^\vee_\om$, so by assumption $f\not\le g$. Hence  $\theta(f)\not\le \theta(g)$ as $\thet$ is a lattice embedding. As the order in 
$\prod_{\beta I}(\prod_U\LL_i)$ is defined pointwise, it follows that there is some $U_0\in\beta I$ such that
 $\theta(f)(U_0)\not\le \theta(g)(U_0)$ in $\prod_{U_0}\LL_i$, i.e.\ $f^{U_0}\not\le g^{U_0}$ and so
 $\br{f\le g}\notin U_0$. Hence some $i_0\in I$ has $f(i_0)\not\le g(i_0)$.
 But for all $k\leq n$, $f\le f_k$ and $g_k\le g$, so this implies  $f_k(i_0)\not\le g_k(i_0)$.
Thus $i_0\in\bigcap_{k\leq n} (I-\br{f_k\le g_k})$.
That confirms that $F$ has the finite intersection property. Therefore there is some $U_1\in\beta I$ with $F\sub U_1$.

Now suppose, for the sake of contradiction, that
$\meet\{\thet(f):f\in S\}\le\join\{\thet(g):g\in T\}$.  Then by \eqref{prodmj},
$\meet\{f^{U_1}:f\in S\}\le\join\{g^{U_1}:g\in T\}$ in $(\prod_{U_1}\LL_i)^\sg$. But the latter is a compact extension of 
$\prod_{U_1}\LL_i$, so then 
\begin{equation}  \label{compred}
\meet\{f^{U_1}:f\in S'\}\le\join\{g^{U_1}:g\in T'\}
\end{equation}
for some finite sets $S'\sub S$ and $T'\sub T$.
Put $f_1=\meet S'\in S^\land_\om$ and $g_1=\join T'\in T^\lor_\om$. As the map $h\mapsto h^{U_1}$ on $\LL$ preserves finite meets and joins, \eqref{compred} asserts that $f_1{}^{U_1}\le g_1{}^{U_1}$. This implies that  $ \br{f_1\le g_1}\in U_1$, which is in contradiction with the fact that
$(I- \br{f_1\le g_1})\in U_1$ by construction. So we cannot have
 $\meet e[S]\le\join e[T]$ after all, and that completes the proof of compactness.
\end{proof}
\section{A Hanf number for canonicity}

We are going to focus  the failure of a variety to be canonical on its failure to contain the canonical extension of a particular one of its free algebras. This analysis is then extended to collections of varieties.

Let  $\V$ be  a class of  lattice-based algebras of some type such that there is a free algebra in $\V$ on any set of generators. 
(Recall the statement from just prior to Lemma \ref{canonicalfree} that this holds if $\V$ is any non-trivial variety.)
For each cardinal $\ka$, the free algebra  in $\V$ on $\ka$-many generators will be denoted  $\LL_\ka(\V)$, or just $\LL_\ka$ if $\V$  is understood. Recall the result of \cite[3.3.12]{vosm:logi10} that the canonical extension of an epimorphism is an epimorphism.

\begin{lemma} \label{kdashclosed}
If $\ka'\leq\ka$, and $\LL_\ka(\V)^\sg$ belongs to $\V$, then so does $\LL_{\ka'}(\V)^\sg$.
\end{lemma}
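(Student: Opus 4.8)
The plan is to exploit the universal mapping property of free algebras together with Lemma~\ref{presfsg} (and its surjective refinement from \cite{vosm:logi10}) in much the same way as the proof of Lemma~\ref{canonicalfree}. If $\ka'\le\ka$, then a set of $\ka$ free generators contains a subset of $\ka'$ of them; mapping those $\ka'$ chosen generators into the copy of $\LL_{\ka'}(\V)$ sitting inside $\LL_\ka(\V)$ (or rather, using freeness in the other direction) produces a retraction. Concretely, first I would fix a generating set $G$ of $\LL_\ka(\V)$ and a subset $G'\sub G$ of cardinality $\ka'$; by the universal property of $\LL_{\ka'}(\V)$ there is a homomorphism $\iota\colon\LL_{\ka'}(\V)\to\LL_\ka(\V)$ sending the $\ka'$ generators of $\LL_{\ka'}(\V)$ bijectively onto $G'$, and by the universal property of $\LL_\ka(\V)$ there is a homomorphism $r\colon\LL_\ka(\V)\to\LL_{\ka'}(\V)$ sending each element of $G'$ to its $\iota$-preimage and each element of $G\setminus G'$ to, say, the image of some fixed generator (or $0$ — any choice works). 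Then $r\circ\iota$ fixes the generators of $\LL_{\ka'}(\V)$, hence $r\circ\iota=\mathrm{id}$, so $r$ is a retraction and in particular an epimorphism $r\colon\LL_\ka(\V)\epi\LL_{\ka'}(\V)$.

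The second step is to apply canonical extension to this epimorphism. By the result of \cite[3.3.12]{vosm:logi10} quoted just above the lemma, $r^\sg\colon\LL_\ka(\V)^\sg\to\LL_{\ka'}(\V)^\sg$ is again a (surjective) homomorphism of lattice-based algebras of the relevant type. Hence $\LL_{\ka'}(\V)^\sg$ is a homomorphic image of $\LL_\ka(\V)^\sg$. Since $\V$ is a variety, it is closed under homomorphic images, so if $\LL_\ka(\V)^\sg\in\V$ then $\LL_{\ka'}(\V)^\sg\in\V$, which is exactly the claim. (Strictly speaking the statement only assumes $\V$ has all free algebras, but the lemma is intended to be applied to varieties; alternatively one can replace "homomorphic image" by functoriality of $(-)^\sg$ applied to the retraction $\iota,r$, noting that $\iota^\sg\circ r^\sg$ need not be the identity, so it is cleaner to invoke closure under homomorphic images.)

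I do not expect any serious obstacle here; the only point requiring a little care is the degenerate case $\ka'=0$ or $\ka=0$. When $\ka=0$ the hypothesis $\ka'\le\ka$ forces $\ka'=0$ and there is nothing to prove. When $0=\ka'<\ka$, the algebra $\LL_0(\V)$ is the free algebra on no generators — the initial algebra of $\V$ — and it still receives a (unique) homomorphism from $\LL_\ka(\V)$, which is automatically surjective onto $\LL_0(\V)$ provided $\LL_0(\V)$ is generated by $\emptyset$, i.e.\ equals the subalgebra generated by the empty set; since our algebras contain the constants $0,1$ this is fine. So the retraction argument goes through uniformly, and the rest is just Lemma~\ref{presfsg}/\cite{vosm:logi10} plus closure of varieties under homomorphic images.
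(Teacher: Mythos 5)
Your proof is correct and takes essentially the same route as the paper's: obtain an epimorphism $\LL_\ka(\V)\epi\LL_{\ka'}(\V)$ from freeness, apply the fact from \cite[3.3.12]{vosm:logi10} that the canonical extension of an epimorphism is an epimorphism, and conclude by closure of the variety under homomorphic images. The only difference is that the paper simply invokes freeness to assert the epimorphism exists, whereas you spell out the retraction construction explicitly; both are fine.
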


\begin{proof}
If $\ka'\leq\ka$, then by freeness of $\LL_\ka(\V)$ there is an epimorphism 
$\thet\colon$ $\LL_\ka(\V) \twoheadrightarrow \LL_{\ka'}(\V)$. Hence $\thet^\sg$ is an epimorphism making $\LL_{\ka'}(\V)^\sg$  a homomorphic image of 
$\LL_{\ka}(\V)^\sg$. But any variety is closed under homomorphic images.
\end{proof}

If  a variety $\V$ is not canonical,  then by Lemma \ref{canonicalfree} there is some 
$\ka$ such that the canonical extension $\LL_\ka(\V)^\sg$ of $\LL_\ka(\V)$ is not in $\V$. We define $\ka_\V$ to be the \emph{smallest}  such cardinal $\ka$.
This $\ka_\V$
may be thought of as the \emph{degree of canonicity} of $\V$. For by Lemma \ref{kdashclosed},
from $\LL_{\ka_\V}(\V)^\sg\notin\V$ it follows that $\LL_{\ka}(\V)^\sg\notin\V$ for all $\ka>\ka_\V$, 
so the class of cardinals $\{\ka: \LL_\ka(\V)^\sg\in\V\}$ is exactly the interval $\{\ka:0\leq\ka<\ka_\V\}$.
Thus the bigger $\ka_\V$ is, the more canonical closure a non-canonical $\V$ has.

There exist non-canonical $\V$ having $\ka_\V=0$. An example comes from  the consistent tense logic constructed  by S.~K.~Thomason \cite{thom:sema72}  that is not validated by any non-empty Kripke frame. Its variety $\V$ of algebraic models has $\LL^\sg\notin\V$ for every $\LL$ in $\V$ that has at least two elements. The free algebra $\LL_0(\V)$ is infinite and has $\LL_{0}(\V)^\sg\notin\V$. So in this case $\ka_\V=0$ while $\V$ is ``anti-canonical''.
 
 Another variety having $\ka_\V=0$ is the class of diagonalizable algebras. These are
modal algebras whose unary operator $f$ satisfies the equation $fa=f(a\land -fa)$.
They are the algebraic models of  the G\"odel-L\"ob modal logic of provability, also known as KW. The proof that this logic is not validated by its canonical Kripke frames (e.g.\  \cite[p.~51]{gold:logi92}) holds when the language is restricted to constant formulas with no variables, and when interpreted algebraically shows that if $\LL_0$ is the free diagonalizable algebra on 0 generators then $\LL_{0}{}^\sg$ is not a diagonalizable algebra. Alternatively one can work directly with the fact that $\LL_0$ is isomorphic to the complex algebra of finite or cofinite subsets of the structure $(\om,>)$  \cite[6.3]{smor:fixe82} to show that the meet in $\LL_{0}{}^\sg$ of the cofinite sets is an element that violates the given equation.
 
 For a case in which $\ka_\V=1$, let $\V$ be the variety of modal algebras validating the logic S4.Grz, also known as K1.1,
 which is the  logic characterised by the class of finite partially-ordered Kripke frames. The members of $\V$ are the closure algebras whose  unary operator satisfies Grzegorcyzk's axiom     \linebreak
 $a\le  f(a\land-f(-a\land fa))$.
 The free 0-generated algebra in this variety is the complex algebra of a one-element poset, i.e.\ the 2-element Boolean algebra with the identity function as closure operator.  So for this $\V$, $\LL_0$ is finite and therefore $\LL_0{}^\sg$ is isomorphic to $\LL_0$ and hence is in $\V$.
 But the proof in \cite{hugh:K1.182} that the logic is not validated by its canonical Kripke frames holds when the language is restricted to formulas with a single variable, and shows that $\LL_{1}(\V)^\sg\notin\V$. Hence $\ka_\V=1$.

 A notable case in which $\ka_\V=3$ is when $\V$ is the variety of modular ortholattices. Its free algebra $\LL_2$ on 2 generators is finite (with 96 elements \cite{kota:axio67}),  so $\LL_2{}^\sg\cong \LL_2\in\V$. But there exists a modular ortholattice $\LL$ with 3 generators that cannot be embedded into any complete modular ortholattice \cite{herr:fini81,brun:fini92}, so as $\LL^\sg$ is complete it cannot be a modular ortholattice (actually it is not modular, since it is an ortholattice by \cite{hard:cano98}). Since $\LL^\sg$ is an epimorphic image of $\LL_3{}^\sg$ it follows that $\LL_3{}^\sg$ is not in the variety either. Hence in this case $\ka_\V=3$.
 
A related example with  $\ka_\V=3$ is when $\V$ is the variety of orthomodular lattices. Here $\LL_2$ is finite and is the same algebra as the free 2-generated modular ortholattice of the previous example \cite[p.~229]{kalm:orth83}. By a construction in 
\cite[Prop.~3.4]{hard:cano98} there is a countable orthomodular lattice $\LL$ such that $\LL^\sg$ is not orthomodular. Since
$\LL^\sg$ is an epimorphic image of $\LL_\om{}^\sg$, it follows that $\LL_\om{}^\sg$ is not orthomodular. But by
\cite{hard:free02}, there is an embedding of $\LL_\om$ into the 3-generated orthomodular lattice $\LL_3$. From this it can be shown that  there is an embedding of $\LL_\om{}^\sg$ into $\LL_3{}^\sg$ (this depends on the fact that orthocomplementation is antitone: see the last paragraph of this Section). Hence $\LL_3{}^\sg$ is not orthomodular either.
 
Despite the individuality of all these examples, it seems plausible to conjecture that every natural number is equal to $\ka_\V$ for some variety $\V$.

 There are known varieties with $\ka_\V=\om$. In \cite{fine:logi74} Fine showed that the modal logic S4.3Grz is valid in all its finitely generated canonical frames, but not  in its $\om$-generated one. Interpreted algebraically, this implies that if $\V$ is the variety of all algebraic models of S4.3Grz, then $\LL_{\ka}(\V)^\sg\in \V$ for all $\ka<\om$ but  $\LL_{\om}(\V)^\sg\notin \V$, giving  $\ka_\V=\om$. Another example of this kind can be obtained from \cite[Section 6]{gold:elem95}, where a variety $\V$ of modal algebras is constructed that is not canonical but is locally finite, meaning that all of its finitely generated members are finite. In this case $\LL_{\ka}(\V)^\sg\cong \LL_{\ka}(\V)\in \V$ for all $\ka<\om$. The non-canonicity  involves an uncountable member $\B$ of $\V$ (a powerset). But the proof that $\B^\sg\notin \V$  depends on the properties of a particular countable subset of $\B$. If $\B_0\in\V$ is the subalgebra of $\B$ generated by this countable subset, then the proof shows that 
$\B_0{}^\sg\notin \V$. Since $\B_0{}^\sg$ is an epimorphic image of $\LL_{\om}(\V)^\sg$, it follows that $\LL_{\om}(\V)^\sg\notin \V$, so $\ka_\V=\om$ for this variety as well.
 
Nothing appears to be known about cases in which $\ka_\V$ is uncountable.  In particular, for varieties of modal algebras, the open question of whether there is one with $\ka_\V>\om$ dates back to \cite{fine:conn75}. 
 
 Now fix a signature for lattice-based algebras and a collection $\Om$ of nontrivial varieties of algebras of that signature.
Examples that might be considered are the collection of all nontrivial varieties of that signature; the collection of  varieties whose members are based on distributive lattices; the collection of  varieties of lattices with operators as additional operations, etc. 

The question of the legitimacy of forming $\Om$ as a collection of proper classes is dealt with in a standard way: the signature is a set (of symbols) whose associated equations form a set. So the class of all sets of equations for that signature  is a set. Since each variety is determined by the set of equations it satisfies, it follows that $\Om$ can be identified with a set (of sets of equations). In particular if we assign to each variety from $\Om$ a cardinal number, then the resulting collection of cardinals is a set.
Thus the collection of cardinals 
$$
S_\Om=\{\ka_\V: \V\in \Om \text{ and $\V$ is not canonical}\}
$$  
is a set,  and so has a supremum which we denote by $\ka_\Om$.   

\begin{theorem} \label{whencanonical}
\begin{enumerate}[\rm(1)]
\item    \label{whencanonical1}
A variety from $\Om$ is canonical if, and only if, it contains the canonical extension $\LL_{\ka_\Om}(\V)^\sg$ of its free algebra on $\ka_\Om$-many generators.
\item
For any variety $\V$ from $\Om$, if\/ $\LL_{\ka_\Om}(\V)^\sg$ is in $\V$ then  so is $\LL_{\ka}(\V)^\sg$ for all $\ka\geq\ka_\Om$. Moreover, $\ka_\Om$ is the least  cardinal with this property.
\end{enumerate}
\end{theorem}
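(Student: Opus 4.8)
The plan is to derive Theorem~\ref{whencanonical} directly from Lemmas~\ref{canonicalfree} and~\ref{kdashclosed} together with the definitions of $\ka_\V$ and $\ka_\Om$; no further machinery is needed, and the whole argument is a short piece of bookkeeping about these cardinals. (Recall that every $\V$ in $\Om$ is nontrivial, so all its free algebras exist.)

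For part~\eqref{whencanonical1}, the forward implication is immediate, since a canonical variety contains $\LL^\sg$ for every $\LL$ it contains, in particular for $\LL_{\ka_\Om}(\V)$. For the converse I would argue contrapositively. Suppose $\V\in\Om$ is \emph{not} canonical. Then $\ka_\V$ is defined, and by construction $\ka_\V$ lies in $S_\Om$, so $\ka_\V\le\ka_\Om$. By the definition of $\ka_\V$ we have $\LL_{\ka_\V}(\V)^\sg\notin\V$, and applying the contrapositive of Lemma~\ref{kdashclosed} with $\ka'=\ka_\V$ and $\ka=\ka_\Om$ yields $\LL_{\ka_\Om}(\V)^\sg\notin\V$. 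Thus $\LL_{\ka_\Om}(\V)^\sg\in\V$ forces $\V$ to be canonical, which is what part~\eqref{whencanonical1} asserts.

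For part~(2), the first assertion follows at once from part~\eqref{whencanonical1}: if $\LL_{\ka_\Om}(\V)^\sg\in\V$ then $\V$ is canonical, hence contains the canonical extension of each of its members, and in particular $\LL_\ka(\V)^\sg\in\V$ for \emph{every} cardinal $\ka$, a fortiori for all $\ka\ge\ka_\Om$. For the minimality claim I would suppose, for contradiction, that some cardinal $\mu<\ka_\Om$ also has the stated property (namely: for every $\V$ from $\Om$, $\LL_\mu(\V)^\sg\in\V$ implies $\LL_\ka(\V)^\sg\in\V$ for all $\ka\ge\mu$). Since $\ka_\Om=\sup S_\Om$ and $\mu<\ka_\Om$, some $\ka_\V\in S_\Om$ must satisfy $\mu<\ka_\V$, for otherwise every element of $S_\Om$ would be $\le\mu$ and so would its supremum. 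For this non-canonical $\V$, the minimality of $\ka_\V$ gives $\LL_\mu(\V)^\sg\in\V$ (as $\mu<\ka_\V$), whereas $\LL_{\ka_\V}(\V)^\sg\notin\V$ with $\ka_\V\ge\mu$; this contradicts the assumed property of $\mu$. Hence $\ka_\Om$ is the least cardinal with the property.

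I do not expect any genuine obstacle: the argument is entirely formal once the definitions are unwound. The only points needing mild care are the elementary fact about suprema used in the minimality step --- that $\mu<\sup S_\Om$ forces some member of $S_\Om$ to exceed $\mu$ --- and the degenerate case $S_\Om=\emptyset$, in which $\ka_\Om=\sup\emptyset=0$, every variety in $\Om$ is canonical, and both parts hold vacuously with $0$ trivially the least such cardinal.
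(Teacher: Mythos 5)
Your proposal is correct and follows essentially the same route as the paper: part (1) via the contrapositive using Lemma~\ref{kdashclosed} and the fact that $\ka_\V\in S_\Om$ implies $\ka_\V\le\ka_\Om$, and the minimality in part (2) via the observation that any cardinal below some $\ka_\V\in S_\Om$ fails the stated property (the paper phrases this as showing any such cardinal is an upper bound of $S_\Om$, which is the contrapositive of your argument). Your explicit treatment of the degenerate case $S_\Om=\emptyset$ is a harmless extra.
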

\begin{proof}
(1):  If  $\V\in\Om$ is canonical, then it contains the canonical extensions of all its members, and in particular that of
$\LL_{\ka_\Om}(\V)$. For the converse, if $\V$ is not canonical, then $\ka_\V\in S_\Om$ and so $\ka_\V\leq\ka_\Om$.
But $\LL_{\ka_\V}(\V)^\sg\notin\V$ by definition of $\ka_\V$, hence by Lemma \ref{kdashclosed}, $\LL_{\ka_\Om}(\V)^\sg\notin\V$ as well.

(2): If $\LL_{\ka_\Om}(\V)^\sg\in\V$, then $\V$ is canonical by (1), so $\V$ contains $\LL_{\ka}(\V)^\sg$ for all $\ka$, hence  for all $\ka\geq\ka_\Om$.
Now suppose $\ka_0$ is a cardinal with the property that for all varieties $\V$ from $\Om$, if $ \LL_{\ka_0}(\V)^\sg\in\V$, then 
$\LL_{\ka}(\V)^\sg \in\V$ for all $\ka\geq\ka_0$. We show that $\ka_0$ is an upper bound of the set of cardinals $S_\Om$, giving $\ka_\Om\leq\ka_0$ as $\ka_\Om$ is the least upper bound of  $S_\Om$.

To see that $\ka_0$ is an upper bound,  take a variety $\V$ that is not canonical, so that  $\ka_\V$ is \emph{least} such that $\LL_{\ka_\V}(\V)^\sg\notin\V$. If we had $\ka_0<\ka_\V$, we would have $\LL_{\ka_0}(\V)^\sg\in\V$ by definition of $\ka_\V$, hence if $\V$ is from $\Om$ then $\LL_{\ka_\V}(\V)^\sg\in\V$ by supposition on $\ka_0$, a contradiction. Therefore  $\ka_\V \leq\ka_0$. So $\ka\leq\ka_0$ for all $\ka\in S_\Om$.
\end{proof}
Part (2) of this theorem suggests that $\ka_\Om$ is an analogue of the \emph{Hanf number} of a formal language (under a given model theory), defined as the least cardinal $\ka$ such that any set of sentences which has a model of size at least $\ka$ has arbitrarily large models (see \cite[Section 4.4]{bell:mode69}).  

The proof of part (2) can be adapted to show that $\ka_\Om$ is the least cardinal with the property given in (1), i.e.\ it is the least $\ka$ such that any variety $\V$ from $\Om$ is canonical if, and only if, it contains $\LL_{\ka}(\V)^\sg$.

If all varieties from $\Om$ are canonical, then $\ka_\Om=0$. By \cite{jons:bool51} this holds when $\Om$ is any collection of BAO varieties that are defined by strictly positive equations. 
Other examples mentioned in Section \ref{sec1}  for which it holds are collections of BAO varieties defined by Sahlqvist equations \cite{deri:sahl95} and collections of  varieties of bounded distributive lattices with operators  \cite{gehr:boun94}.

We will now show that in part (1) of  Theorem \ref{whencanonical},  $\LL_{\ka_\Om}$ can be replaced by  $\LL_{\om}$ when $\ka_\Om$ is finite, and by the direct power $\LL_\om\!^{\ka_\Om}$ under certain circumstances when
$\ka_\Om$ is infinite.

\begin{theorem}
Suppose $\ka_\Om$ is finite. Then a variety $\V$ from $\Om$ is canonical if, and only if, it contains  $\LL_{\om}(\V)^\sg$.
\end{theorem}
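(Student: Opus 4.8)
The plan is to reduce the statement immediately to part (1) of Theorem \ref{whencanonical} by means of Lemma \ref{kdashclosed}, so that essentially no new work is required. The forward implication is trivial: if $\V$ is canonical then it contains the canonical extension of every one of its members, and $\LL_\om(\V)$ is a member of $\V$, so $\LL_\om(\V)^\sg\in\V$.

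For the converse, I would assume $\LL_\om(\V)^\sg\in\V$ and aim to verify the hypothesis of Theorem \ref{whencanonical}(1), namely that $\LL_{\ka_\Om}(\V)^\sg\in\V$. The key observation is that the finiteness hypothesis on $\ka_\Om$ gives $\ka_\Om\le\om$. Hence Lemma \ref{kdashclosed}, applied with $\ka'=\ka_\Om$ and $\ka=\om$, yields directly that $\LL_{\ka_\Om}(\V)^\sg\in\V$. Then Theorem \ref{whencanonical}(1) asserts precisely that a variety from $\Om$ is canonical if, and only if, it contains $\LL_{\ka_\Om}(\V)^\sg$; so $\V$ is canonical, as required.

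There is no real obstacle here: the substantive content has already been packaged into Lemma \ref{kdashclosed} (whose proof rests on the fact, from \cite{vosm:logi10}, that $\thet^\sg$ is an epimorphism when $\thet$ is, together with closure of varieties under homomorphic images) and into Theorem \ref{whencanonical}. The only point that needs to be flagged explicitly is the inequality $\ka_\Om\le\om$, which is exactly where the finiteness assumption is consumed; without it, Lemma \ref{kdashclosed} would give no link from $\LL_\om(\V)^\sg$ to $\LL_{\ka_\Om}(\V)^\sg$, and indeed the infinite case is what the subsequent discussion (with the direct power $\LL_\om\!^{\ka_\Om}$) is designed to handle instead.
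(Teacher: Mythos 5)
Your proof is correct and is essentially the paper's own argument: the paper proves the same converse in contrapositive form (if $\V$ is not canonical then $\LL_{\ka_\Om}(\V)^\sg\notin\V$ by Theorem \ref{whencanonical}(1), hence $\LL_\om(\V)^\sg\notin\V$ by Lemma \ref{kdashclosed} since $\ka_\Om<\om$), using exactly the two ingredients you identify. Your version merely runs the implication forwards and spells out the trivial direction, so there is nothing to add.
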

\begin{proof}
If $\V$ is not canonical, then $\LL_{\ka_\Om}(\V)^\sg$ is not in $\V$ and $\ka_\Om<\om$, so $\LL_{\om}(\V)^\sg$ is not in 
$\V$ by Lemma \ref{kdashclosed}.
\end{proof}

For the case of infinite $\ka_\Om$ we will use the following fact about varieties of algebras in general.

\begin{lemma}  \label{injhomlk}
For any nontrivial variety and any infinite cardinal $\ka$ there is a monomorphism 
$\thet_\ka\colon\LL_\ka \rightarrowtail \LL_\om\!^\ka$
 from $\LL_\ka$ into the $\ka$-th direct power of\/ $\LL_\om$.
\end{lemma}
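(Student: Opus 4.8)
The plan is to build the monomorphism $\thet_\ka$ by a coordinatewise assignment of the $\ka$ free generators of $\LL_\ka$ to suitable elements of $\LL_\om\!^\ka$, chosen so that the resulting homomorphism separates the free generators well enough to be injective. Write the generators of $\LL_\ka$ as $(x_\al : \al < \ka)$ and the generators of $\LL_\om$ as $(y_n : n < \om)$. Since $\ka$ is infinite, fix an enumeration $(E_\beta : \beta < \ka)$ of all finite subsets of $\ka$ (there are exactly $\ka$ of them), and for each $\beta < \ka$ fix an injection $\iota_\beta\colon E_\beta \to \om$. Now define, for each generator $x_\al$ of $\LL_\ka$, the element $g_\al \in \LL_\om\!^\ka$ whose $\beta$-th coordinate is $y_{\iota_\beta(\al)}$ if $\al \in E_\beta$, and is (say) $0$ if $\al \notin E_\beta$. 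By freeness of $\LL_\ka$, the map $x_\al \mapsto g_\al$ extends uniquely to a homomorphism $\thet_\ka\colon \LL_\ka \to \LL_\om\!^\ka$. (Here we use that $\LL_\om \in \V$, so $\LL_\om\!^\ka \in \V$, so this homomorphism exists.)

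The key step is injectivity. Suppose $t(x_{\al_1},\dots,x_{\al_m})$ and $s(x_{\al_1},\dots,x_{\al_m})$ are two terms in finitely many of the generators, and that $\thet_\ka$ identifies the corresponding elements of $\LL_\ka$; we must show $t = s$ already holds in $\LL_\ka$, i.e.\ that the equation $t \approx s$ holds in $\V$. Let $E = \{\al_1,\dots,\al_m\}$, a finite subset of $\ka$, so $E = E_\beta$ for some $\beta < \ka$. Reading off the $\beta$-th coordinate of $\thet_\ka(t) = \thet_\ka(s)$ gives $t(y_{\iota_\beta(\al_1)},\dots,y_{\iota_\beta(\al_m)}) = s(y_{\iota_\beta(\al_1)},\dots,y_{\iota_\beta(\al_m)})$ in $\LL_\om$, and since $\iota_\beta$ is injective, $y_{\iota_\beta(\al_1)},\dots,y_{\iota_\beta(\al_m)}$ are \emph{distinct} free generators of $\LL_\om$. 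Hence the term operations $t$ and $s$ agree on a tuple of distinct free generators of a free algebra, which forces $t \approx s$ to be a consequence of the equational theory of $\V$, and therefore $t(x_{\al_1},\dots,x_{\al_m}) = s(x_{\al_1},\dots,x_{\al_m})$ in $\LL_\ka$. This proves $\thet_\ka$ is a monomorphism.

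The main obstacle—really the only delicate point—is the bookkeeping that guarantees every finite tuple of generators of $\LL_\ka$ gets sent, in at least one coordinate, to a tuple of pairwise distinct generators of $\LL_\om$; this is exactly what the enumeration $(E_\beta)_{\beta<\ka}$ of finite subsets together with the coordinatewise injections $\iota_\beta$ accomplishes, and it is where infiniteness of $\ka$ is used (so that there are only $\ka$ finite subsets to index by $\ka$ coordinates). One should also note in passing that the standard fact ``a term equation holds in a free algebra of $\V$ when evaluated at distinct free generators if and only if it holds identically in $\V$'' is immediate from the universal property of free algebras. No further case analysis is needed, and the construction does not depend on the signature beyond $\V$ being nontrivial (so that $\LL_\om$ genuinely has distinct free generators).
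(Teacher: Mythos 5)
Your proposal is correct and follows essentially the same route as the paper: both index the coordinates of the power by the finite subsets of the generating set of $\LL_\ka$ (of which there are exactly $\ka$ since $\ka$ is infinite), send each generator coordinatewise to generators of $\LL_\om$ via injections on those finite subsets, and deduce injectivity from the fact that distinct free generators satisfy only the identities of the variety. The only cosmetic difference is that the paper proves this last fact explicitly by building a retraction $\LL_\om\to\LL_\ka$ from freeness, whereas you invoke it as the standard universal property of free algebras.
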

\begin{proof}
That $\LL_\ka$  is embeddable in \emph{some} direct power of $\LL_\om$ is a consequence of the universal-algebraic analysis of varieties. The variety is generated by its $\LL_\om$, i.e.\ is equal to $\mathbf{HSP}\{\LL_\om\}$ \cite[4.132]{mcke:alge87}.
So all of its free algebras belong to $\mathbf{SP}\{\LL_\om\}$ \cite[4.119]{mcke:alge87}, making each $\LL_\ka$ isomorphic to a subalgebra of a direct power of $\LL_\om$.
An exercise in \cite[p.~77]{burr:univ81} states that the exponent for  this direct power can be taken to be $|\LL_\om|^\ka$. Here the size  $|\LL_\om|$ of $\LL_\om$ depends on the size of its signature,  but in any case $|\LL_\om|^\ka$  is at least $2^\ka$. 

To prove the result with the smaller exponent $\ka$, we use  the universal mapping property that defines free algebras.
Suppose that $\LL_\ka$ and $\LL_\om$ have generating sets $G_\ka$ and $G_\om$, of sizes $\ka$ and $\om$ respectively.
First we show that for each finite subset $i$ of $G_\ka$ there is a homomorphism 
$\thet_{i}\colon\LL_\ka \to \LL_\om$ that maps $i$ into $G_\om$ and is injective on the subalgebra of $\LL_\ka$ generated by $i$.
For, if $i$ is of size $n<\om$ we choose a subset $j$ of $G_\om$ that is of size $n$ and disjoint from $i$, and use the
freeness of $\LL_\ka$ to obtain a homomorphism $\thet_{i}\colon\LL_\ka\to\LL_\om$ that maps $i$ bijectively onto $j$.
If $a$ and $b$ belong to the subalgebra of $\LL_\ka$ generated by $i$, then  there are terms $s,t$ and a tuple 
$\vec{c}$ of distinct members of $i$ such that $a=s^{\LL_\ka}(\vec{c}\,)$ and  $b=t^{\LL_\ka}(\vec{c}\,)$.
Since homomorphisms commute with term functions, 
$\thet_{i}(a)=s^{\LL_\om}(\thet_i(\vec{c}\,))$ and $\thet_{i}(b)=t^{\LL_\om}(\thet_i(\vec{c}\,))$, where $\thet_i(\vec{c}\,)$ is the tuple got by applying $\thet_i$ to the coordinates of $\vec{c}$.
As $\LL_\om$ is free there is a homomorphism $\thet':\LL_\om\to\LL_\ka$ with $\thet'(\thet_i(\vec{c}\,))=\vec{c}$.
Then $\thet'(\thet_{i}(a))= \thet'(s^{\LL_\om}(\thet_i(\vec{c}\,)))
= s^{\LL_\ka}(\vec{c}\,)=a$ and likewise $\thet'(\thet_{i}(b))=b$. Thus if $a\ne b$, then $\thet_{i}(a)\ne \thet_{i}(b)$
as required to show that $\thet_{i}$ is injective on the subalgebra generated by $i$.

Now take $I$ to be the set of all finite  subsets  of $G_\ka$, and let $\thet\colon\LL_\ka \to \LL_\om\!^I$ be the homomorphism given by the product map
$a\mapsto\langle \theta_{i}(a):i\in I\rangle$. If $a\ne b$ in $\LL_\ka$, there is an $i\in I$ such that $a$ and $b$ belong to the subalgebra of $\LL_\ka$ generated by $i$. Then $\thet_{i}(a)\ne \thet_{i}(b)$ as above, which is enough to ensure that
$\thet(a)\ne \thet(b)$. Hence $\thet$ is a monomorphism.
But as $\ka$ is infinite,  $I$ is of cardinality $\ka$, so there is an isomorphism from $\LL_\om\!^I$ onto $ \LL_\om\!^\ka$ which composes with $\thet$ to give the desired $\thet_\ka\colon\LL_\ka \rightarrowtail \LL_\om\!^\ka$.
\end{proof}

Note that if $\ka$ is finite, then the above proof delivers a monomorphism $\LL_\ka \rightarrowtail \LL_\om$. For in that case $G_\ka$ is finite, so we can put $i=G_\ka$ and get that $\thet_{i}\colon\LL_\ka\to\LL_\om$ is injective on $\LL_\ka$ itself.

\begin{theorem} \label{equivalences}
Suppose $\ka_\Om$ is infinite, and let
 $\V$ be any variety from $\Om$ for which  the canonical extension of 
$\thet_{\ka_\Om}\colon\LL_{\ka_\Om} \rightarrowtail \LL_\om\!^{\ka_\Om}$ is a homomorphism.
Then the following are equivalent.
\begin{enumerate}[\rm(1)]
\item 
$\V$ is canonical.
\item
$\V$ contains the canonical extension $(\LL_\om\!^U)^\sg$ of the ultrapower $\LL_\om\!^U$ of $\LL_\om$ for every ultrafilter $U$ on any set.
\item
$\V$ contains the canonical extension $(\LL_\om\!^I)^\sg$ of the direct power $\LL_\om\!^I$ of $\LL_\om$ for every set $I$.
\item
$\V$ contains  $(\LL_\om\!^{\ka_\Om})^\sg$.
\end{enumerate}
\end{theorem}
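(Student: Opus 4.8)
The plan is to establish the cycle of implications $(1)\Rightarrow(3)\Rightarrow(2)\Rightarrow(4)\Rightarrow(1)$, using Theorem~\ref{boolrepsig}, Theorem~\ref{whencanonical}(1), and the hypothesis that $\thet_{\ka_\Om}^\sg$ is a homomorphism. The implication $(1)\Rightarrow(3)$ is immediate: a canonical variety contains the canonical extension of each of its members, and $\LL_\om{}^I\in\V$ since varieties are closed under direct powers. The implication $(3)\Rightarrow(2)$ is also easy, since any ultrapower $\LL_\om{}^U$ (for $U$ an ultrafilter on a set $I$) is a quotient of the direct power $\LL_\om{}^I$, so $(\LL_\om{}^U)^\sg$ is a homomorphic image of $(\LL_\om{}^I)^\sg$ by Lemma~\ref{presfsg} together with Vosmaer's result \cite[3.3.12]{vosm:logi10}, and $\V$ is closed under homomorphic images.

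The implication $(2)\Rightarrow(4)$ is where Theorem~\ref{boolrepsig} enters. Taking all factors $\LL_i$ equal to $\LL_\om$ and $I=\ka_\Om$, that theorem gives
\[
(\LL_\om{}^{\ka_\Om})^\sg\;\cong\;\underset{U\in\beta\ka_\Om}{\prod}(\LL_\om{}^U)^\sg,
\]
and each factor $(\LL_\om{}^U)^\sg$ is in $\V$ by (2); since $\V$ is closed under direct products, the product lies in $\V$, hence so does $(\LL_\om{}^{\ka_\Om})^\sg$.

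The implication $(4)\Rightarrow(1)$ is the one that uses the special hypothesis on $\V$ and is the step I expect to be the crux. By Theorem~\ref{whencanonical}(1) it suffices to show that $\LL_{\ka_\Om}(\V)^\sg\in\V$. The monomorphism $\thet_{\ka_\Om}\colon\LL_{\ka_\Om}\mono\LL_\om{}^{\ka_\Om}$ from Lemma~\ref{injhomlk} has, by hypothesis, a canonical extension $\thet_{\ka_\Om}^\sg$ that is a homomorphism; by Lemma~\ref{presfsg} it is moreover injective, since $\thet_{\ka_\Om}$ is. So $\thet_{\ka_\Om}^\sg\colon\LL_{\ka_\Om}{}^\sg\mono(\LL_\om{}^{\ka_\Om})^\sg$ is a monomorphism of lattice-based algebras of the given type. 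Since $(\LL_\om{}^{\ka_\Om})^\sg\in\V$ by (4) and $\V$ is closed under subalgebras, it follows that $\LL_{\ka_\Om}{}^\sg\in\V$, and Theorem~\ref{whencanonical}(1) then gives that $\V$ is canonical. The only delicate point is making sure the hypothesis is deployed correctly: ``$\thet_{\ka_\Om}^\sg$ is a homomorphism'' must be read as saying it respects the \emph{additional} operations $f_j$ (it automatically respects the lattice structure by Lemma~\ref{presfsg}), so that $\thet_{\ka_\Om}^\sg$ is a morphism in the category of algebras of the full signature, which is exactly what is needed to invoke closure of $\V$ under subalgebras.
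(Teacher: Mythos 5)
Your proof is correct, and its two load-bearing steps are the same as the paper's: Theorem~\ref{boolrepsig} with all factors equal to $\LL_\om$, and the hypothesis that $\thet_{\ka_\Om}\!^\sg$ is an (injective, by Lemma~\ref{presfsg}) homomorphism, fed into Theorem~\ref{whencanonical}(1) for $(4)\Rightarrow(1)$. The only real difference is the orientation of the cycle. The paper runs $(1)\Rightarrow(2)\Rightarrow(3)\Rightarrow(4)\Rightarrow(1)$, which makes $(3)\Rightarrow(4)$ a triviality and confines all the work to $(2)\Rightarrow(3)$ (Boolean products) and $(4)\Rightarrow(1)$. You run $(1)\Rightarrow(3)\Rightarrow(2)\Rightarrow(4)\Rightarrow(1)$, which forces you to supply a substantive argument for $(3)\Rightarrow(2)$: the quotient map $\LL_\om\!^I\epi\LL_\om\!^U$ is a surjection of algebras of the full signature, so by Vosmaer's result its canonical extension is a surjective homomorphism $(\LL_\om\!^I)^\sg\epi(\LL_\om\!^U)^\sg$, and $\V$ is closed under homomorphic images. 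That argument is sound, but it is an extra appeal to the epimorphism-preservation machinery that the paper's ordering avoids. Your closing remark about how the hypothesis must be read --- that $\thet_{\ka_\Om}\!^\sg$ respects the additional operations, the lattice part being automatic from Lemma~\ref{presfsg} --- is exactly the right reading and matches the paper's intent.
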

\begin{proof}
(1) implies (2): $\V$ is closed under ultrapowers, so contains $\LL_\om\!^U$ for any ultrafilter $U$. Hence (2) is immediate from (1).

(2) implies (3): For any set $I$, by the direct power case of Theorem \ref{boolrepsig}, there is an isomorphism
\begin{equation*}\textstyle
\big(\LL_\om\!^I\big)^\sg \cong \prod_{U\in \beta I}\big(\LL_\om\!^U\big)^\sg.
\end{equation*}
Thus if (2) holds, then since $\V$ is closed under direct products and isomorphism, we get  $(\LL_\om\!^I)^\sg$ in $\V$.

(3) implies (4): Immediate.

(4) implies (1):       $\thet_{\ka_\Om}$ exists by Lemma \ref{injhomlk}, as $\ka_\Om$ is infinite.
By assumption,  $\thet_{\ka_\Om}\!^\sg\colon\LL_{\ka_\Om}\!^\sg \to \big(\LL_\om\!^{\ka_\Om}\big)^\sg$
is a homomorphism. It is also injective, since $\thet_{\ka_\Om}$ is injective, by the second part of Lemma \ref{presfsg}.
Thus $\LL_{\ka_\Om}\!^\sg$ is isomorphic to a subalgebra of $\big(\LL_\om\!^{\ka_\Om}\big)^\sg$, so if (4) holds then
$\LL_{\ka_\Om}\!^\sg$ belongs to $\V$, hence $\V$  is canonical by Theorem \ref{whencanonical}\eqref{whencanonical1}.
\end{proof}

Note that (1) directly implies (4)  in this theorem, and so the equivalence of (1) and (4) itself depends only on Theorem \ref{whencanonical}\eqref{whencanonical1}, hence does not require the detour via (2) and (3).  Thus it does not depend on the Boolean product analysis behind Theorem \ref{boolrepsig}.

As to the assumption that $\thet_{\ka_\Om}\!^\sg$ is a homomorphism, it is known that canonical extension
 does not preserve the property of being a homomorphism in general \cite[Example 3.8]{gehr:boun04} (although it does preserve  \emph{epi}morphisms as we have seen). But one context in which $f^\sg$ is guaranteed to be a homomorphism whenever $f$ is a homomorphism concerns \emph{monotone} algebras, those whose additional operations are,  in each coordinate,  either isotone or antitone. The class of all monotone  lattice-based algebras of a given type forms a category under homomorphisms on which canonical extension acts functorially, in particular taking homomorphisms to homomorphisms \cite[Theorem 5.4]{gehr:boun01}. Thus the equivalence of (1)--(4) in Theorem \ref{equivalences} holds for any variety of monotone lattice-ordered algebras. It is noteworthy  that the Theorem also holds if the assumption that
 $\thet_{\ka_\Om}\!^\sg$ is a homomorphism is weakened to just requiring that there is \emph{some} monomorphism from
 $\LL_{\ka_\Om}\!^\sg$ to $\big(\LL_\om\!^{\ka_\Om}\big)^\sg$.

\section{Polarities}

The following construction of complete lattices was given by Birkhoff in the first edition of his book 
\cite[Section 32]{birk:latt40}.
A \emph{polarity} is a structure $P=(X,Y,R)$ comprising sets $X$ and $Y$ and a binary relation 
$R\sub X\times Y$. 
This induces functions $\rho\colon\wp X\to\wp Y$ and $\lam\colon\wp Y\to\wp X$, where $\wp$ denotes powerset. Each
 subset $A$ of $X$ has the `right set'  $\rho A=\{y\in Y: \forall x\in A,xRy\}$. Each $B\sub Y$ has the `left set'
 $\lam B=\{x\in X: \forall y\in B,xRy\}$. The  functions $\rho$ and $\lam$ are inclusion-reversing and satisfy $A\sub\lam\rho A$ and $B\sub\rho\lam B$, i.e.\ they are a Galois connection between the posets $(\wp X,\sub)$ and $(\wp Y,\sub)$.
 A set $A\sub X$ is \emph{stable} if $\lam\rho A\sub A$ and hence $\lam\rho A=A$. 
 Since in fact $\lam\rho\lam B=\lam B$, the stable subsets of $X$ are precisely the sets $\lam B$ for all $B\sub Y$.
 The set $P^+$ of all stable subsets of $X$ is a complete lattice under the inclusion order, in which the meet $\meet G$ of any $G\sub P^+$ is its intersection 
 $\bigcap G$ and the join $\join G$ is $\lam\rho(\bigcup G)$. We call $P^+$ the \emph{stable set lattice} of $P$. Its greatest element is $X$ and its least element is $\lam\rho\emptyset=\lam Y$.
 We may write the Galois connection functions as $\rho_R,\lam_R$ when necessary to indicate which relation produces them.
 
The stable set lattice construction was used by Gehrke and Harding \cite{gehr:boun01} to obtain a canonical extension of any lattice $\LL$  as the stable set lattice of the polarity for which $X$ is the set of filters of $\LL$, $Y$ is the set of ideals, and $xRy$ iff $x\cap y\ne\emptyset$. The embedding $e$ in this case has $e(a)=\{x\in X:a\in x\}$.
 
 A fact that will be useful later is
 
 \begin{lemma} \label{meetjoin}
For any polarity $P$, if $A$ is a stable subset of $X$, then in $P^+$,
$A=\bigcap_{y\in\rho A}\lam\{y\}=\join_{x\in A}\lam\rho\{x\}.$       \hfill \qed

\end{lemma}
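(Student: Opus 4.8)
The plan is to unwind the definitions of $\rho$, $\lam$, and of the meet and join in $P^+$, and then exploit the hypothesis that $A$ is stable, i.e.\ $\lam\rho A=A$.

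For the first equality I would begin by noting that each set $\lam\{y\}$ is stable, being of the form $\lam B$ with $B=\{y\}$; hence the intersection $\bigcap_{y\in\rho A}\lam\{y\}$ is again stable and is therefore genuinely the meet of $\{\lam\{y\}:y\in\rho A\}$ in $P^+$. Then a direct computation finishes it: $x$ lies in $\bigcap_{y\in\rho A}\lam\{y\}$ iff $xRy$ for every $y\in\rho A$, which is precisely the defining condition for $x\in\lam(\rho A)$, and $\lam\rho A=A$ by stability. The degenerate case $\rho A=\emptyset$ is handled automatically, since the empty meet in $P^+$ is its greatest element $X$ and $\lam\emptyset=X$.

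For the second equality the key observation is that $\lam\rho\{x\}$ is the \emph{smallest} stable subset of $X$ containing $x$: it contains $x$ because $\{x\}\sub\lam\rho\{x\}$, it is stable because it has the form $\lam(\rho\{x\})$, and if $A'$ is any stable set with $x\in A'$ then $\{x\}\sub A'$ and inclusion-reversal give $\lam\rho\{x\}\sub\lam\rho A'=A'$. Applying this with $A'=A$ (which is stable) yields $\lam\rho\{x\}\sub A$ for every $x\in A$, whence $\bigcup_{x\in A}\lam\rho\{x\}\sub A$; the reverse inclusion is immediate from $x\in\lam\rho\{x\}$. Thus $\bigcup_{x\in A}\lam\rho\{x\}=A$ as plain subsets of $X$. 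Since the join in $P^+$ of a family $G$ is $\lam\rho(\bigcup G)$, we conclude $\join_{x\in A}\lam\rho\{x\}=\lam\rho\big(\bigcup_{x\in A}\lam\rho\{x\}\big)=\lam\rho A=A$, using stability once more.

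I do not anticipate a genuine obstacle here; the only point needing a little care is to remember that joins in $P^+$ are obtained by applying the closure operator $\lam\rho$ to the union rather than by taking unions outright, so that the second identity is not quite the triviality it might first appear to be — though, as noted, it collapses to that once one checks $\bigcup_{x\in A}\lam\rho\{x\}=A$.
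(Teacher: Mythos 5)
Your proof is correct and is exactly the routine verification the paper has in mind (the lemma is stated with its proof omitted as immediate): the first identity is $\bigcap_{y\in\rho A}\lam\{y\}=\lam\rho A=A$, and the second follows from $\lam\rho\{x\}$ being the least stable set containing $x$ together with the fact that joins in $P^+$ are $\lam\rho$ of unions. Your attention to the point that the join is not the plain union, and to the degenerate cases, is exactly the right care to take.
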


 An arbitrary polarity $P$ is a structure for the first-order language of the signature $\sL=\{\ov X,\ov Y,\ov R\}$ in which the relation symbols $\ov X$ and $\ov Y$ are unary and interpreted as the sets $X$ and $Y$ of $P$, while $\ov R$ is binary and interpreted as the relation $R$ of $P$. We use a set $\{v_n:n<\omega\}$ of individual variables ranging over $X\cup Y$. Thus $P$ is a model of the $\sL$-sentences
 $\forall v_0(\ov X(v_0)\lor\ov Y(v_0))$ and $\forall v_0\forall v_1(v_0\ov Rv_1\to\ov X(v_0)\land\ov Y(v_1))$.
 
 We will need to expand $\sL$ by adding various unary relation symbols $S$, typically interpreted as a subset of $X$. Then we define $\rho S(v_1)$ to be the formula
  $\forall v_0(S(v_0)\to v_0\ov Rv_1)$, and let $\lam\rho S(v_2)$ be $\forall v_1(\rho S(v_1)\to v_2\ov Rv_1)$. If $\sL'=\sL\cup\{S\}$ and a polarity $P$ is expanded to an $\sL'$-structure $P'$ by interpreting $S$ as the set $A\sub X$, then the formula $\rho S$ defines $\rho_RA$ in $P'$, i.e.\ 
 $P'\models (\rho S)[y]$ iff $y\in\rho_RA$. Hence $\lam\rho S$ defines $\lam_R\rho_RA$.
 Thus if $\mathsf{stable}$-$S$ is the sentence $\forall v_2(\lam\rho S(v_2)\to S(v_2))$, then $\mathsf{stable}$-$S$ expresses stability of $A$, i.e.\ $P'\models\mathsf{stable}$-$S$ iff $A$ is stable.
 
Next we discuss ultraproducts of polarities.
 Let $\{P_i=(X_i,Y_i,R_i):i\in I\}$ be a set of polarities and $U$ an ultrafilter on the index set $I$.
 The ultraproduct $\prod_U P_i$ is defined to be the polarity $(\prod_U X_i,\prod_U Y_i, R^U)$, where the binary relation 
 $R^U$ has $f^UR^Ug^U$ iff $\{i\in I:f(i)R_ig(i)\}\in U$.
When all the factors $P_i$ are equal to a single polarity $P=(X,Y,R)$, then the ultraproduct is the \emph{ultrapower} 
$P^U=(X^U,Y^U,R^U)$ of $P$ with respect to $U$. 
 
 Repeated use will be made of  \L o\'s's theorem, the so-called fundamental theorem of ultraproducts \cite[4.1.9]{chan:mode73}.
 If $\ph(v_0,\dots,v_n)$ is an $\sL$-formula and $f_0,\dots,f_n\in (\prod_I X_i)\cup(\prod_I Y_i)$, let
 \[
 \br{\ph(f_0,\dots,f_n)}=\{i\in I:P_i\models \ph[f_0(i),\dots,f_n(i)]\}.  
 \]
\L o\'s's Theorem states that  $\prod_U P_i\models\ph[f_0^U,\dots,f_n^U]$ iff $ \br{\ph(f_0,\dots,f_n)}\in U$.
In particular, if $\ph$ is a sentence, then $\prod_U P_i\models\ph$ iff $\{i:P_i\models\ph\}\in U$.

This result continues to hold for expansions of $\sL$ by unary relation symbols $S$. For instance, take a function $\al\in\prod_I\wp X_i$, and expand each $P_i$ to an $\sL\cup\{S\}$-structure $P_i'$ by interpreting $S$ as the set $\al(i)\sub X_i$.
Then the ultraproduct  $\prod_U P_i'$ is $\prod_U P_i$ with $S$ interpreted as the set
\begin{equation}\textstyle  \label{interpS}
\{f^U\in \prod_U X_i: \{i\in I:f(i)\in\al(i)\}\in U\}.
\end{equation}
\L o\'s's Theorem holds under this construction for $\sL\cup\{S\}$-formulas, and more generally for $\sL'$-formulas where 
$\sL'$ is any expansion of $\sL$ got by adding any number of unary relation symbols interpreted by functions $\al$ as above.
For such an $\sL'$ we reformulate \L o\'s's theorem as a result about definable sets that will be convenient for our purposes.
If $\al\in\prod_I\wp X_i$ and $f\in \prod_I X_i$, let $\br{f\in\al}=\{i\in I:f(i)\in\al(i)\}$. Then $\al^U\in\prod_U\wp X_i$ and we define
\begin{equation}  \textstyle  \label{defthet}
\thet(\al^U)=\{f^U\in \prod_U X_i: \br{f\in\al}\in U\},
\end{equation}
which is the set \eqref{interpS}. $\thet(\al^U)$ is a well-defined function of $\al^U$, since the set \eqref{interpS} is unchanged if $\al$ is replaced by any $\al'\in\prod_I\wp X_i$ with $\al^U=\al'\,^U$.

\begin{lemma} \label{Los}
Let $\ph(v_0,\dots,v_n)$ be any $\sL'$-formula. Suppose $\al\in\prod_I\wp X_i$ and $f_1,\dots,f_n \in (\prod_I X_i)\cup(\prod_I Y_i)$. If for all $i\in I$,
\[
\al(i)=\{x\in X_i:P'_i\models\ph[x,f_1(i),\dots,f_n(i)]\},
\]
then \enspace
$   \textstyle
\thet(\al^U)= \{f^U\in\prod_UX_i : \prod_U P_i'\models\ph[f^U,f_1^U,\dots,f_n^U]\}.
$
\end{lemma}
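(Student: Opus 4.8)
The plan is to observe that this lemma is nothing more than a restatement of \L o\'s's theorem for the expanded language $\sL'$, repackaged through the definition of $\thet$ in \eqref{defthet}. So the proof should be pure bookkeeping, and I would organise it around the single identity $\br{f\in\al}=\br{\ph(f,f_1,\dots,f_n)}$.

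First I would fix an arbitrary $f\in\prod_IX_i$ and unwind $\br{f\in\al}=\{i\in I:f(i)\in\al(i)\}$. By the hypothesis on $\al$, for each $i\in I$ we have $f(i)\in\al(i)$ if and only if $P_i'\models\ph[f(i),f_1(i),\dots,f_n(i)]$; hence $\br{f\in\al}$ coincides exactly with $\br{\ph(f,f_1,\dots,f_n)}=\{i\in I:P_i'\models\ph[f(i),f_1(i),\dots,f_n(i)]\}$. Next I would apply \L o\'s's theorem in the form already stated above for $\sL'$-formulas (the preceding paragraph records that \L o\'s's theorem holds for $\sL'$, i.e.\ for expansions of $\sL$ by any number of unary relation symbols interpreted by functions $\al$): this gives $\prod_U P_i'\models\ph[f^U,f_1^U,\dots,f_n^U]$ if and only if $\br{\ph(f,f_1,\dots,f_n)}\in U$. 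Chaining the two equivalences, $f^U\in\thet(\al^U)$ — which by \eqref{defthet} means $\br{f\in\al}\in U$ — holds precisely when $\prod_U P_i'\models\ph[f^U,f_1^U,\dots,f_n^U]$. Since every element of $\prod_UX_i$ is of the form $f^U$, this yields the asserted equality of sets.

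I do not expect any real obstacle. The only point deserving a remark is well-definedness: $\thet(\al^U)$ must depend only on $\al^U$ and not on the representative $\al$, and the set on the right-hand side must likewise depend only on $f^U$ and the relevant $\al^U$; but the former was already noted just before the lemma (the set \eqref{interpS} is unchanged when $\al$ is replaced by any $\al'$ with $\al^U={\al'}^U$), and the latter is immediate from \L o\'s's theorem together with the fact that the interpretation of each relation symbol of $\sL'$ in $\prod_U P_i'$ is determined by $\thet$ applied to the corresponding function. So the two descriptions of the set are consistent, and the proof reduces to the computation of $\br{f\in\al}$ and one invocation of \L o\'s's theorem for $\sL'$.
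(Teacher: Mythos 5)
Your proof is correct and follows essentially the same route as the paper's: compute $\br{f\in\al}=\{i\in I:P'_i\models\ph[f(i),f_1(i),\dots,f_n(i)]\}$ from the hypothesis and then apply \L o\'s's theorem for $\sL'$ to conclude $\br{f\in\al}\in U$ iff $\prod_U P_i'\models\ph[f^U,f_1^U,\dots,f_n^U]$. The extra remark on well-definedness is harmless but already covered by the discussion preceding the lemma.
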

\begin{proof}
We have  $\br{f\in\al}=\{i\in I:P'_i\models\ph[f(i),f_1(i),\dots,f_n(i)]\}$,
so by \L o\'s's Theorem,
$\br{f\in\al}\in U$ iff  $\prod_U P_i'\models\ph[f^U,f_1^U,\dots,f_n^U]$.
\end{proof}
The case $n=0$ of this lemma states that if a formula $\ph(v_0)$ defines $\al(i)$ in $P_i'$ for all $i\in I$, then it defines 
$\thet(\al^U)$ in $\prod_U P_i'$.
 We apply this to establish the following relationship between the stable set lattice of $\prod_U P_i$ and the stable set lattices of the factors $P_i$.
  
\begin{theorem}   \label{ultembed}
There is a lattice monomorphism
$ \prod_U(P_i^+)   \mono    ( \prod_UP_i)^+ $ from the ultraproduct of the stable set lattices $P_i^+$ into the stable set lattice of the ultraproduct $\prod_U P_i$.
\end{theorem}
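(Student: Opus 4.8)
The plan is to write the embedding down explicitly. An element of $\prod_U(P_i^+)$ can be represented by a function $\al\in\prod_I\wp X_i$ with every $\al(i)$ a stable subset of $X_i$, and I would send it to the set $\thet(\al^U)=\{f^U\in\prod_UX_i:\br{f\in\al}\in U\}$ of \eqref{defthet}. By the remark following \eqref{defthet} this depends only on the class $\al^U$, so the one thing to verify for well-definedness is that $\thet(\al^U)$ is a stable subset of $\prod_UP_i$. This is the first use of \L o\'s's theorem: expand each $P_i$ to an $\sL\cup\{S\}$-structure $P_i'$ interpreting $S$ as $\al(i)$; since $\al(i)$ is stable, every $P_i'$ satisfies the sentence $\mathsf{stable}$-$S$, hence so does $\prod_UP_i'$, which by \eqref{interpS} is just $\prod_UP_i$ with $S$ interpreted as $\thet(\al^U)$; because $\lam\rho S$ defines $\lam_{R^U}\rho_{R^U}\thet(\al^U)$ in that structure, satisfaction of $\mathsf{stable}$-$S$ says exactly that $\thet(\al^U)$ is stable.

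Writing $h$ for the resulting map $\prod_U(P_i^+)\to(\prod_UP_i)^+$, I would next check it is a monomorphism of bounded lattices. Injectivity is a routine ultrafilter argument (the case of $U$-almost-all $X_i$ empty being trivial): if $\al^U\ne\beta^U$ then one of $\{i:\al(i)\not\sub\beta(i)\}$ and $\{i:\beta(i)\not\sub\al(i)\}$ lies in $U$, and picking witnesses $x_i$ on that set yields an $f$ with $f^U$ in the symmetric difference of $\thet(\al^U)$ and $\thet(\beta^U)$. Preservation of meets is equally direct: the meet in $\prod_U(P_i^+)$ is represented by $i\mapsto\al(i)\cap\beta(i)$, the meet in $(\prod_UP_i)^+$ is intersection, and $\br{f\in\al\cap\beta}\in U$ iff both $\br{f\in\al}\in U$ and $\br{f\in\beta}\in U$. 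Preservation of $0$ and $1$ is the same kind of computation through Lemma \ref{Los}: the constant functions $i\mapsto X_i$ and $i\mapsto\lam_{R_i}Y_i$ represent the top and bottom of $\prod_U(P_i^+)$, and the $\sL$-formulas $\ov X(v_0)$ and $\ov X(v_0)\land\forall v_1(\ov Y(v_1)\to v_0\ov Rv_1)$ define $\prod_UX_i$ and $\lam_{R^U}\rho_{R^U}\emptyset$ in $\prod_UP_i$.

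The one genuinely non-trivial point, and the reason the argument must go through the first-order language rather than through sets directly, is preservation of joins: the join of $\al(i)$ and $\beta(i)$ in $P_i^+$ is the Galois closure $\lam_{R_i}\rho_{R_i}(\al(i)\cup\beta(i))$, the join in $\prod_U(P_i^+)$ is assembled from these closures coordinatewise, whereas the join of $\thet(\al^U)$ and $\thet(\beta^U)$ in $(\prod_UP_i)^+$ is the single closure $\lam_{R^U}\rho_{R^U}(\thet(\al^U)\cup\thet(\beta^U))$, and the whole point is to reconcile these. I would do so by expanding each $P_i$ with two unary symbols $S_1,S_2$ interpreted as $\al(i),\beta(i)$ and letting $\psi$ be the $\sL'$-formula obtained from $\lam\rho S$ by replacing the atom $S(v_0)$ with $S_1(v_0)\lor S_2(v_0)$; then $\psi$ defines $\gamma(i):=\lam_{R_i}\rho_{R_i}(\al(i)\cup\beta(i))$ in each $P_i'$, so Lemma \ref{Los} (the $n=0$ case) gives $\thet(\gamma^U)=\{f^U:\prod_UP_i'\models\psi[f^U]\}$, while $\prod_UP_i'$ is $\prod_UP_i$ with $S_1,S_2$ interpreted as $\thet(\al^U),\thet(\beta^U)$, where $\psi$ manifestly defines $\lam_{R^U}\rho_{R^U}(\thet(\al^U)\cup\thet(\beta^U))$. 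Equating the two descriptions of $\thet(\gamma^U)$ yields $h(\al^U\lor\beta^U)=h(\al^U)\lor h(\beta^U)$, finishing the proof. The crux — the step I expect to be the real obstacle — is precisely this transfer of the closure operator $\lam\rho$ across the ultraproduct via its definability; everything else is bookkeeping with equaliser sets.
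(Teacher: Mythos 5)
Your proposal is correct and follows essentially the same route as the paper: the map is the one given by \eqref{defthet}, stability of its values is obtained by transferring the sentence $\mathsf{stable}$-$S$ via \L o\'s's theorem, and preservation of the bounds and of binary joins is established by exhibiting first-order definitions of the relevant sets (in particular of the Galois closure $\lam\rho(S_1\cup S_2)$) and applying Lemma \ref{Los}. The only difference is that you spell out the injectivity and meet-preservation computations, which the paper delegates to citations of earlier work on complex algebras.
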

\begin{proof}
An element of $\prod_U(P_i^+) $ has the form $\al^U$ with $\al\in\prod_I(P_i^+)$. If we expand each $P_i$ to  $P_i'$ by interpreting a new symbol $S$ as the set $\al(i)$, then by \eqref{interpS} and \eqref{defthet} the interpretation of $S$ in $\prod_U P_i'$ is $\thet(\al^U)$. But the sentence $\mathsf{stable}$-$S$ is true in every factor $P_i'$, since $\al(i)\in P_i^+$, so by \L o\'s's Theorem it is true in $\prod_U P_i'$, implying that $\thet(\al^U)$ is stable in $\prod_U P_i$. Thus $\theta$ maps $\prod_U(P_i^+)$ into $ ( \prod_UP_i)^+$.
 
That $\thet$ is a monomorphism was shown in \cite{gold:meta74} in the case of modal algebras of subsets of Kripke frames (and extended to BAO's in general as complex algebras of relational structures in \cite{gold:vari89}). The proof that $\thet$ acting on $\prod_U(P_i^+)$  is well-defined, injective, and preserves binary meet (= intersection in stable set lattices) is just as  in \cite[Section 1.7]{gold:math93}.
To see that $\theta$ preserves the lattice bounds, note first that the greatest element of $\prod_U(P_i^+)$ is $1^U$ where $1$ is the greatest element of $\prod_I(P_i^+)$, having $1(i)=X_i$. Thus $1(i)$ is defined in $P_i$ by the formula $\ov X(v_0)$ for all $i\in I$, so by Lemma \ref{Los} this formula defines  $\thet(1^U)$ in $\prod_UP_i$. Hence  $\thet(1^U)=\prod_UX_i$, the greatest element of $( \prod_UP_i)^+ $.
The  least element of $\prod_U(P_i^+)$ is $0^U$ where $0$ is the least element of $\prod_I(P_i^+)$, having $0(i)=\lam_{R_i}Y_i$, which is defined in $P_i$ by the formula $\forall v_1(\ov Y(v_1)\to v_0\ov R v_1)$. So this formula defines $\thet(0^U)$ by Lemma \ref{Los}, making $\thet(0^U)=\lam_{R^U}\prod_U Y_i$,  the least element of $( \prod_UP_i)^+ $.

It remains  to show that $\thet$ preserves binary joins:
$
\thet(\al_1^U)\lor\thet(\al_2^U) = \thet(\al_1^U\lor\al_2^U).
$
Note first that
 $\al_1^U\lor\al_2^U$ is $(\al_1\lor\al_2)^U$, where $\al_1\lor\al_2$ is the point-wise join of $\al_1$ and $\al_2$ in 
$\prod_I(P_i^+)$, i.e.\ $(\al_1\lor\al_2)(i)= \al_1(i)\lor\al_2(i) \in P_i^+$.
Now take new unary symbols $S_1$ and $S_2$ and expand each $P_i$ to  $P_i'$ by interpreting $S_1$ as $\al_1(i)$ and $S_2$ as $\al_2(i)$. Let $\ph(v_0)$ be the formula
\[
\forall v_1\big[\forall v_2(S_1(v_2)\lor S_2(v_2)\to v_2\ov R v_1)\to v_0\ov Rv_1\big],
\]
expressing `$v_0\in\lam\rho(S_1\cup S_2)$'.
In each $P_i'$, $\ph(v_0)$ defines  $\lam_{R_i}\rho_{R_i}(\al_1(i)\cup\al_2(i))= (\al_1\lor\al_2)(i)$,
so by Lemma \ref{Los} it defines $\thet((\al_1\lor\al_2)^U)= \thet(\al_1^U\lor\al_2^U)$ in $\prod_U P_i'$.
But since $S_1$ defines $\thet(\al_1^U)$ and $S_2$ defines  $\thet(\al_2^U)$, the formula defines
$\thet(\al_1^U)\lor\thet(\al_2^U)$ in $\prod_U P_i'$, proving the desired preservation of the join.
\end{proof}

\section{MacNeille completions and  ultrapowers}

A \emph{MacNeille completion} of a lattice $\LL$ is a completion $e\colon \LL\mono\ov\LL$ of $\LL$ such that $e[\LL]$ is both meet-dense and join-dense in the complete lattice $\ov\LL$, i.e.\  every member of $\ov\LL$ is both a meet  of  elements of $e[\LL]$ and a join of  elements of $e[\LL]$. Every lattice has a MacNeille completion, and any two such completions are isomorphic by a unique isomorphism commuting with the embeddings of $\LL$ (see e.g.\ \cite{dave:intr90}).

We now show that any ultrapower  $(P^+)^U$ of the stable set lattice of a polarity $P=(X,Y,R)$ has a meet-dense and join-dense embedding into the stable set lattice $(P^U)^+$ of the ultrapower $P^U=(X^U,Y^U,R^U)$ of $P$.

\begin{theorem}  \label{Macppu}
For any polarity $P$ and any ultrafilter $U$ on a set $I$, the embedding 
$\thet\colon (P^+)^U\mono (P^U)^+$ is a MacNeille completion of $ (P^+)^U$.
\end{theorem}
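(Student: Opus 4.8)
The plan is to verify that the image $\thet[(P^+)^U]$ is both meet-dense and join-dense in the complete lattice $(P^U)^+$; together with the fact, from the ultrapower case of Theorem \ref{ultembed}, that $\thet$ is a lattice embedding of $(P^+)^U$ into $(P^U)^+$, this is precisely the assertion that $(\thet,(P^U)^+)$ is a MacNeille completion of $(P^+)^U$. The two density conditions will be reduced, via Lemma \ref{meetjoin} applied to the polarity $P^U$, to showing that all ``principal'' stable sets $\lam_{R^U}\{g^U\}$ (for $g^U\in Y^U$) and $\lam_{R^U}\rho_{R^U}\{f^U\}$ (for $f^U\in X^U$) of $P^U$ already lie in $\thet[(P^+)^U]$.

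For the first kind, fix $g^U\in Y^U$ together with a representative $g\in\prod_I Y$; such a representative exists, since $\{i:g(i)\in Y\}\in U$ and $g$ may be altered off a member of $U$. For each $i$ put $\al(i)=\lam_R\{g(i)\}$, which is a stable subset of $X$, so that $\al\in\prod_I(P^+)$ and $\al^U$ is a genuine element of $(P^+)^U=\prod_U(P^+)$. Since $\al(i)=\{x\in X:P\models(v_0\ov Rv_1)[x,g(i)]\}$ for each $i$, the $n=1$ case of Lemma \ref{Los} gives $\thet(\al^U)=\{f^U:P^U\models(v_0\ov Rv_1)[f^U,g^U]\}=\lam_{R^U}\{g^U\}$. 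Hence $\lam_{R^U}\{g^U\}\in\thet[(P^+)^U]$ for every $g^U\in Y^U$. Now if $A$ is any stable subset of $X^U$, Lemma \ref{meetjoin} applied to $P^U$ gives $A=\bigcap_{y\in\rho_{R^U}A}\lam_{R^U}\{y\}$, so $A$ is a meet in $(P^U)^+$ of members of $\thet[(P^+)^U]$; thus $\thet[(P^+)^U]$ is meet-dense.

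Dually, fix $f^U\in X^U$ with a representative $f\in\prod_I X$, and put $\beta(i)=\lam_R\rho_R\{f(i)\}$, again a stable subset of $X$, so that $\beta^U\in(P^+)^U$. Let $\psi(v_0,v_1)$ be the $\sL$-formula $\forall v_2(v_1\ov Rv_2\to v_0\ov Rv_2)$. Because the sentence $\forall v_0\forall v_1(v_0\ov Rv_1\to\ov X(v_0)\land\ov Y(v_1))$ holds in $P$, and hence in $P^U$ by \L o\'s's Theorem, the quantifier $\forall v_2$ in $\psi$ effectively ranges only over elements of $Y$ (resp.\ $Y^U$) once $v_1$ is assigned an element of $X$ (resp.\ $X^U$); consequently $\psi(v_0,v_1)$ defines $\lam_R\rho_R\{f(i)\}$ in $P$ at the value $f(i)$ for each $i$, and by Lemma \ref{Los}, $\thet(\beta^U)=\{h^U:P^U\models\psi[h^U,f^U]\}=\lam_{R^U}\rho_{R^U}\{f^U\}$. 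So every set of the form $\lam_{R^U}\rho_{R^U}\{f^U\}$ with $f^U\in X^U$ lies in $\thet[(P^+)^U]$. By Lemma \ref{meetjoin} again, any stable $A\in(P^U)^+$ equals $\join_{x\in A}\lam_{R^U}\rho_{R^U}\{x\}$, a join in $(P^U)^+$ of members of $\thet[(P^+)^U]$. Hence $\thet[(P^+)^U]$ is also join-dense, and therefore $\thet$ is a MacNeille completion of $(P^+)^U$.

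The argument is largely bookkeeping once these pieces are in place. The point needing most care is the passage between the syntactic side (the $\sL$-formulas and \L o\'s's Theorem via Lemma \ref{Los}) and the semantic side (the Galois maps $\lam_{R^U},\rho_{R^U}$ of the ultrapower): one must check that $\al(i)$ and $\beta(i)$ really are \emph{stable}, so that $\al^U$ and $\beta^U$ are legitimate elements of $(P^+)^U$; that the displayed $\sL$-formulas define exactly the stated sets in each factor $P$; and, for $\psi$, that moving to the ultrapower does not enlarge the effective range of $\forall v_2$, which holds because $R^U\sub X^U\times Y^U$.
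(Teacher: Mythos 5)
Your proof is correct and follows essentially the same route as the paper: both reduce the two density conditions via Lemma \ref{meetjoin} to showing that the sets $\lam_{R^U}\{g^U\}$ and $\lam_{R^U}\rho_{R^U}\{f^U\}$ lie in $\image\thet$, and both realise these as $\thet(\al^U)$ for pointwise-defined $\al$ using Lemma \ref{Los} with the very same formulas $v_0\ov Rv_1$ and $\forall v_2(v_1\ov Rv_2\to v_0\ov Rv_2)$. Your extra remark that the quantifier $\forall v_2$ effectively ranges over $Y$ because $R\sub X\times Y$ is a correct detail that the paper leaves implicit.
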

\begin{proof}
$(P^U)^+$ is a complete lattice, and
 $\thet$ is the ultrapower case of the embedding provided by Theorem \ref{ultembed}, with
$\thet(\al^U)=\{f^U\in X^U: \br{f\in\al}\in U\}$ for any $\al\in (P^+)^I$.  Let $\image\thet$ be the $\thet$-image of
$(P^+)^U$ in $(P^U)^+$.

Now by Lemma \ref{meetjoin}, for any stable set $A\sub X^U$ we have
\[    
A=\bigcap\{\lam_{R^U}\{g^U\}: g^U\in\rho_{R^U} A\}=\join\{\lam_{R^U}\rho_{R^U}\{h^U\} : h^U\in A \},
\]
so to prove that $\image\thet$ is meet and join dense in $(P^U)^+$ it suffices to show that $\image\thet$ contains all sets of the form $\lam_{R^U}\{g^U\}$ and $\lam_{R^U}\rho_{R^U}\{h^U\}$.

First, for any $g\in Y^I$, define $\al\in(P^+)^I$ by putting $\al(i)=\lam_{R}\{g(i)\}$.
Let $\ph(v_0,v_1)$ be the formula $v_0\ov Rv_1$. Then $\al(i)=\{x\in X:P\models\ph[x,g(i)]\}$ for all $i\in I$,
so by Lemma \ref{Los}, $\thet(\al^U)=\{f^U\in X^U: P^U\models\ph[f^U,g^U]\}=\lam_{R^U}\{g^U\}$. 
Hence $\lam_{R^U}\{g^U\}\in\image\thet$.

Next, take any $h\in X^I$ and define $\al\in(P^+)^I$ by putting $\al(i)=\lam_{R}\rho_R\{h(i)\}$.
Let $\ph(v_0,v_1)$ be the formula $\forall v_2(v_1\ov Rv_2\to v_0\ov Rv_2)$.
Then for all $i\in I$, $\al(i)=\{x\in X:P\models\ph[x,h(i)]\}$, so  
$\thet(\al^U)=\{f^U\in X^U: P^U\models\ph[f^U,h^U]\}=\lam_{R^U}\rho_{R^U}\{h^U\}$.
\end{proof}

In \cite{gehr:macn06} Gehrke, Harding and Venema showed that any lattice $\LL$ has an extension $\LL^*$ such that the canonical extension $\LL^\sg$ of $\LL$ is embeddable into any MacNeille completion $\ov{\LL^*}$ of $\LL^*$.
(In fact this is shown for any monotone lattice expansion, a point we will return to later.)

 The embedding $\eta\colon \LL^\sg\mono\ov{\LL^*}$ is defined from the embedding $\varepsilon$ of $\LL$ into $\ov{\LL^*}$ by putting
\[
\eta x =\join\big\{\meet \{\varepsilon a:p\le a\in\LL\}:x\ge p\in K(\LL^\sg)\}
\]
(cf.\ \eqref{canextf}). Then $\eta$ is a complete lattice embedding of $ \LL^\sg$ into $\ov{\LL^*}$ provided that $\LL^*$ has a saturation property related to the size of $\LL$. An extension having this property can be obtained as an ultrapower of $\LL$, using the theory of saturation of ultrapowers \cite[6.1]{chan:mode73}. So for any lattice $\LL$ there exists an ultrafilter $U$ and an embedding of $ \LL^\sg$ into $\ov{\LL^U}$.

\begin{theorem} \label{PMac}
For any polarity $P$ there exists an ultrafilter $U$ and an embedding of the canonical extension $(P^+)^\sg$ into the stable set lattice $(P^U)^+$ of the $U$-ultrapower of $P$.
\end{theorem}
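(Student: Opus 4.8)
The plan is to combine the two main ingredients developed in the preceding sections: Theorem~\ref{Macppu}, which identifies the embedding $\thet\colon(P^+)^U\mono(P^U)^+$ as a MacNeille completion of $(P^+)^U$, and the result of Gehrke, Harding and Venema \cite{gehr:macn06} recalled immediately above, which says that the canonical extension $\LL^\sg$ of a lattice $\LL$ embeds into the MacNeille completion of any sufficiently saturated ultrapower of $\LL$.

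First I would apply the latter result with $\LL=P^+$. By the theory of saturated ultrapowers \cite[6.1]{chan:mode73}, there is an ultrafilter $U$ on some index set $I$ such that the ultrapower $(P^+)^U$ is saturated enough for the embedding $\eta\colon(P^+)^\sg\mono\ov{(P^+)^U}$ to be a complete lattice embedding into a MacNeille completion $\ov{(P^+)^U}$ of $(P^+)^U$. Next I would invoke Theorem~\ref{Macppu}, which tells us that $\thet\colon(P^+)^U\mono(P^U)^+$ is \emph{a} MacNeille completion of $(P^+)^U$. Since MacNeille completions are unique up to isomorphism commuting with the embedding of $(P^+)^U$, there is an isomorphism $\iota\colon\ov{(P^+)^U}\to(P^U)^+$ with $\iota\circ\varepsilon=\thet$, where $\varepsilon\colon(P^+)^U\mono\ov{(P^+)^U}$ is the canonical embedding. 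Composing, $\iota\circ\eta$ is the desired embedding of $(P^+)^\sg$ into $(P^U)^+$.

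Since this is essentially a bookkeeping argument that merely patches together two previously established theorems, I do not anticipate a serious obstacle. The only point requiring a little care is the matching of the two ultrapower constructions: the ultrafilter $U$ is chosen purely to make $(P^+)^U$ saturated relative to $|P^+|$, and that choice is available uniformly by the standard existence results for saturated ultrapowers, so the same $U$ that witnesses the \cite{gehr:macn06} embedding also feeds into Theorem~\ref{Macppu}. One should also note that $(P^+)^U$ here denotes the ultrapower of $P^+$ as a \emph{lattice}, while $P^U$ is the ultrapower of $P$ as a \emph{polarity}; Theorem~\ref{Macppu} is precisely the bridge that reconciles these two notions, so no further argument is needed.

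\begin{proof}
Apply the result of Gehrke, Harding and Venema \cite{gehr:macn06} recalled above to the lattice $\LL=P^+$. Using the theory of saturated ultrapowers \cite[6.1]{chan:mode73}, choose an ultrafilter $U$ on an index set $I$ such that the ultrapower $(P^+)^U$ is saturated to the degree required for that result; then the map $\eta$ defined there is a complete lattice embedding of $(P^+)^\sg$ into a MacNeille completion $\varepsilon\colon(P^+)^U\mono\ov{(P^+)^U}$ of $(P^+)^U$. By Theorem \ref{Macppu}, the embedding $\thet\colon(P^+)^U\mono(P^U)^+$ is also a MacNeille completion of $(P^+)^U$. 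By the uniqueness of MacNeille completions up to a unique isomorphism commuting with the embedding of $(P^+)^U$, there is an isomorphism $\iota\colon\ov{(P^+)^U}\to(P^U)^+$ with $\iota\circ\varepsilon=\thet$. Then $\iota\circ\eta$ is an embedding of $(P^+)^\sg$ into $(P^U)^+$, as required.
\end{proof}
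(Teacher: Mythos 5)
Your proposal is correct and follows essentially the same route as the paper: both combine the Gehrke--Harding--Venema embedding of $(P^+)^\sg$ into the MacNeille completion of a sufficiently saturated ultrapower with Theorem~\ref{Macppu}'s identification of $\thet\colon(P^+)^U\mono(P^U)^+$ as such a MacNeille completion. The only cosmetic difference is that the paper applies the lifting of \cite{gehr:macn06} directly to $(P^U)^+$ (taking $\varepsilon$ to be $\thet$ composed with the diagonal elementary embedding $P^+\mono(P^+)^U$), whereas you pass through an abstract MacNeille completion and transfer along the uniqueness isomorphism; both are valid.
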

\begin{proof}
Putting  $\LL=P^+$ in the above analysis from \cite{gehr:macn06} gives $\LL^*= (P^+)^U$ for a suitable ultrafilter $U$. We take $\varepsilon\colon P^+\mono (P^U)^+$ to be the composition of the standard elementary embedding $\iota: P^+\mono(P^+)^U$ with the MacNeille completion
$\thet\colon (P^+)^U\mono (P^U)^+$ from Theorem \ref{Macppu}. Here $\iota$ maps each $A\in P^+$ to $\al_A{}^U$, where $\al_A$ is the constant map with value $A$; so $\varepsilon$ maps $A$ to $\{f^U\in X^U: \{i:f(i)\in A\} \}\in U$, which is the `enlargement' of $A$ in $X^U$ in the sense of nonstandard analysis.

By \cite[Theorem 32.2]{gehr:macn06},  $\varepsilon$ lifts to an embedding
$\eta\colon (P^+)^\sg\mono (P^U)^+$ as described above.
\end{proof}

 \section{Generating canonical varieties}
 
 A class $\cS$ of polarities has an associated class of lattices: the stable set lattices of all members of $\cS$. We are going to  show that if $\cS$ is closed under ultraproducts, then the variety generated by its class of stable set lattices is closed under canonical extensions.
But first we will  axiomatise the argument for this, so that it can be applied not just to polarities,  but to a class of structures of any kind for which ultraproducts are defined.

The symbols $\mono$ and $\epi$ will now be used to denote binary relations between algebras, writing $\A\mono\B$ to mean that there exists an injective homomorphism from $\A$ to $\B$, and $\A\epi\B$ to mean that there exists an surjective one. These relations are transitive on any similarity class of algebras. A variety $\V$ is closed under the relations in the sense that if $\A\mono\B\in\V$ then $\A\in\V$; and if $\A\epi\B$ and $\A\in\V$, then $\B\in\V$.

We envisage a situation involving the following four ingredients:
\begin{itemize}
\item 
A class $\Sigma$ of structures, of some type, that is closed under ultraproducts.
\item
A variety $\CC$ of algebras of some given algebraic signature.
\item
An operation $(-)^\sg\colon\CC\to\CC$ assigning to each algebra $\A\in\CC$ another algebra $\A^\sg\in\CC$.
\item
An operation $(-)^+\colon\Sigma\to\CC$ assigning to each structure $P\in\Sigma$ an algebra $P^+\in\CC$.
\end{itemize}
These ingredients will be said to form a \emph{canonicity framework} if they satisfy the following axioms
for all $\A,\B\in\CC$, all indexed subsets $\{P_i:i\in I\}$ of $\Sigma$,  and all $P\in\Sigma$.

\begin{enumerate}[({A}1)]
\item 
If  $\A\mono\B$ then  $\A^\sg\mono\B^\sg$, and if $\A\epi\B$ then $\A^\sg\epi\B^\sg$.
\item
$ \prod_U(P_i^+)   \mono    ( \prod_UP_i)^+ $, for any ultrafilter $U$ on $I$.
\item
There exists an ultrafilter $U$ such that $(P^+)^\sg\mono (P^U)^+$.
\item
$\big(\prod_I (P_i^+)\big)^\sg \mono \underset{U\in \beta I}{\prod} \big(\prod_U (P_i^+)\big)^\sg$.
\end{enumerate}

 \begin{theorem} \label{canframe}
For any canonicity framework, if $\cS$ is any subclass of $\Sigma$ that is closed under ultraproducts, then
the variety of algebras generated by  $\cS^+=\{P^+:P\in\cS\}$ is closed under the  operation     $(-)^\sg$.
\end{theorem}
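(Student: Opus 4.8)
The plan is to reduce the statement to free algebras and then run a single diagram chase through the four axioms. Write $\V=\mathbf{HSP}(\cS^+)$ for the variety generated by $\cS^+$; since $\cS^+\sub\CC$ and $\CC$ is a variety, $\V$ is a subvariety of $\CC$, so $(-)^\sg$ is defined on every member of $\V$. The first step is an abstract form of Lemma~\ref{canonicalfree}: \emph{$\V$ is closed under $(-)^\sg$ as soon as it contains $\LL^\sg$ for every free algebra $\LL$ of $\V$.} Indeed, given $\A\in\V$, choose a free algebra $\LL$ of $\V$ admitting an epimorphism $\LL\epi\A$ (every algebra in a variety is a homomorphic image of a free one, e.g.\ of the free algebra on its underlying set); then axiom~(A1) yields $\LL^\sg\epi\A^\sg$, so if $\LL^\sg\in\V$ then $\A^\sg\in\V$ since $\V$, being a variety, is closed under homomorphic images. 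Only the epimorphism half of~(A1) is used here.

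Now fix a free algebra $\LL$ of $\V$. Since $\V$ is generated by $\cS^+$, the standard description of the free algebras of a variety generated by a class $K$, namely that they lie in $\mathbf{SP}(K)$ (cf.\ \cite[4.119]{mcke:alge87}), shows that $\LL$ is isomorphic to a subalgebra of a product $\prod_{i\in I}P_i^+$ with every $P_i\in\cS$. Hence $\LL\mono\prod_I P_i^+$, so by the monomorphism half of~(A1) we get $\LL^\sg\mono(\prod_I P_i^+)^\sg$, and then~(A4) gives $(\prod_I P_i^+)^\sg\mono\prod_{U\in\beta I}(\prod_U P_i^+)^\sg$.

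It remains to locate this last product inside $\V$, and for that I examine a single factor $(\prod_U P_i^+)^\sg$. By~(A2), $\prod_U P_i^+\mono(\prod_U P_i)^+$, so by~(A1), $(\prod_U P_i^+)^\sg\mono((\prod_U P_i)^+)^\sg$. The structure $\prod_U P_i$ lies in $\cS$ because $\cS$ is closed under ultraproducts, so~(A3) supplies an ultrafilter $V$ (depending on $U$) with $((\prod_U P_i)^+)^\sg\mono((\prod_U P_i)^V)^+$. An ultrapower is an ultraproduct, so $(\prod_U P_i)^V$ again lies in $\cS$ and therefore $((\prod_U P_i)^V)^+\in\cS^+$. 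Composing these embeddings, each factor $(\prod_U P_i^+)^\sg$ embeds into a member of $\cS^+$. Taking the product of these embeddings over $U\in\beta I$ (a product of injective homomorphisms is injective) shows that $\prod_{U\in\beta I}(\prod_U P_i^+)^\sg$ embeds into a product of members of $\cS^+$, which lies in $\mathbf{P}(\cS^+)\sub\V$. Chaining with the embeddings of the previous paragraph and using transitivity of $\mono$, we obtain $\LL^\sg\mono\M$ for some $\M\in\V$, whence $\LL^\sg\in\V$ because varieties are closed under subalgebras. As $\LL$ was an arbitrary free algebra of $\V$, the first step completes the proof.

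I do not expect any single step to be a genuine obstacle: the axioms~(A1)--(A4) have been isolated precisely so that the argument reduces to this diagram chase. What needs the most care is the opening reduction to free algebras --- which mirrors Lemma~\ref{canonicalfree} and relies on~(A1) preserving epimorphisms --- together with the routine bookkeeping that every ultraproduct and ultrapower built from members of $\cS$ remains in $\cS$, and that $\mono$ is transitive and is preserved by forming direct products.
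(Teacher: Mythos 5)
Your proof is correct and follows essentially the same route as the paper's: the same chase through (A2), (A1), (A3) to embed each $\big(\prod_U(P_i^+)\big)^\sg$ into a member of $\cS^+$, followed by assembly over $\beta I$ via (A4). The only difference is cosmetic: the paper applies the $\mathbf{HSP}$ decomposition directly to an arbitrary $\A\in\V$ (an epimorphism from, and a monomorphism into a product of stable set lattices of, some $\B$), whereas you first detour through free algebras and their $\mathbf{SP}$ description --- a harmless but unnecessary extra step.
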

\begin{proof}
The variety $\V$ in question is $\mathbf{HSP}\cS^+$. Thus if $\A$ is in $\V$, then there exists an algebra $\B$ and a set $\{P_i:i\in I\}\sub \cS$ such that $\B\epi \A$ and $\B\mono\C$ where $\C=\prod_I (P_i^+)$.
As $\CC$ is a variety including $\cS^+$, we have $\V\sub\CC$ and in particular $\A,\B$ and $\C$ are in $\CC$.
Hence $\A^\sg,\B^\sg$ and $\C^\sg$ are defined.
 By (A1), 
$\B^\sg\epi \A^\sg$ and $\B^\sg\mono\C^\sg$. So it suffices to prove that $\C^\sg$ is in $\V$ in order to obtain the desired conclusion that $\A^\sg$ is in the variety $\V$.

Let $U$ be any ultrafilter on $I$. Then $\prod_U (P_i^+)\in\CC$ as $\CC$ is closed under ultraproducts.
We will show that $\big(\prod_U (P_i^+)\big)^\sg$ belongs to $\V$. Put $P=\prod_U P_i$. Then $P\in\cS$ as $\cS$ is closed under ultraproducts, and $ \prod_U(P_i^+)   \mono  P^+$ by (A2). Hence
$\big( \prod_U(P_i^+)\big)^\sg   \mono  (P^+)^\sg$ by (A1).
By (A3) there is an ultrapower $P'$ of $P$ such that $(P^+)^\sg\mono (P')^+$. Hence
$\big( \prod_U(P_i^+)\big)^\sg   \mono  (P')^+$. But $ P'\in\cS$, as $\cS$ is closed under ultrapowers, so 
$(P')^+\in\cS^+\sub\V$. Hence  $\big( \prod_U(P_i^+)\big)^\sg\in\V$ as claimed.

Since $\V$ is closed under products,   it now follows that 
$\prod_{U\in \beta I}\big(\prod_U (P_i^+)\big)^\sg$ belongs to $\V$.
Hence by (A4),  $\C^\sg=\big(\prod_I (P_i^+)\big)^\sg$ is in $\V$, completing the proof.
\end{proof}

\begin{corollary}  \label{polcanon}
If $\cS$ is any class of polarities that is closed under ultraproducts, then the variety of lattices generated by $\cS^+$ is closed under canonical extensions.
\end{corollary}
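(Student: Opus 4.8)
The plan is to recognise Corollary \ref{polcanon} as the instance of Theorem \ref{canframe} obtained from the evident \emph{canonicity framework} built from polarities. I would take $\Sigma$ to be the class of all polarities, $\CC$ to be the variety of all bounded lattices, $(-)^+\colon\Sigma\to\CC$ the stable set lattice construction $P\mapsto P^+$ (well defined, since $P^+$ is a complete, hence bounded, lattice by the Birkhoff construction recalled in Section 5), and $(-)^\sg\colon\CC\to\CC$ the canonical extension, which is well defined since $\LL^\sg$ is a complete bounded lattice. Note that $\CC$, being a variety, is closed under ultraproducts, as required in the definition of a framework, and that $\cS^+\sub\CC$ so that $\mathbf{HSP}\,\cS^+\sub\CC$ and every relevant $(-)^\sg$ is defined.

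It then remains to verify the four axioms (A1)--(A4) for these data. Axiom (A1) is the content of Lemma \ref{presfsg} together with Vosmaer's theorem (recalled just before Lemma \ref{canonicalfree}): if $h\colon\A\to\B$ is an injective bounded lattice homomorphism then $h^\sg$ is a complete lattice homomorphism that is again injective by the second part of Lemma \ref{presfsg}, so $\A^\sg\mono\B^\sg$; and if $h$ is surjective then $h^\sg$ is a surjective homomorphism, so $\A^\sg\epi\B^\sg$. Axiom (A2) is exactly Theorem \ref{ultembed}, and axiom (A3) is exactly Theorem \ref{PMac}. For axiom (A4) I would apply the direct product case of Theorem \ref{boolrepsig} with $\LL_i=P_i^+$: this yields an isomorphism $\bigl(\prod_I(P_i^+)\bigr)^\sg\cong\prod_{U\in\beta I}\bigl(\prod_U(P_i^+)\bigr)^\sg$, which is in particular a monomorphism of the required form.

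With (A1)--(A4) in hand, Theorem \ref{canframe} applies directly: for any subclass $\cS\sub\Sigma$ closed under ultraproducts, the variety of algebras generated by $\cS^+$ is closed under $(-)^\sg$, which in the present instance is precisely the assertion that the variety of lattices generated by $\cS^+$ is closed under canonical extensions. There is no genuine obstacle here, since every ingredient has been assembled in Sections 2--6; the only point needing a moment's care is the bookkeeping that $\CC$ really is a variety containing all the $P^+$, which is immediate once $\CC$ is taken to be the variety of all bounded lattices.
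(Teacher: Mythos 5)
Your proposal is correct and follows essentially the same route as the paper: instantiate the canonicity framework with $\Sigma$ the class of all polarities, $\CC$ the variety of bounded lattices, $(-)^+$ the stable set lattice construction and $(-)^\sg$ the canonical extension, and discharge (A1)--(A4) via Lemma \ref{presfsg}, Theorem \ref{ultembed}, Theorem \ref{PMac} and Theorem \ref{boolrepsig} respectively. The only cosmetic difference is that you also invoke Vosmaer's result for the surjective half of (A1), which is unnecessary here since Lemma \ref{presfsg} already covers both injectivity and surjectivity for plain bounded lattice homomorphisms.
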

\begin{proof}
We obtain a canonicity framework by taking $\Sigma$ as the class of all polarities, $\CC$ as the class of all bounded lattices, $(-)^\sg$ as the operation of canonical extension, and $(-)^+$ as the stable set lattice operation. (A1) then holds by Lemma \ref{presfsg}, (A2) by Theorem \ref{ultembed}, (A3) by Theorem \ref{PMac}, and (A4) as a consequence of Theorem \ref{boolrepsig} with $\LL_i=P_i^+$.
\end{proof}
As explained in Section \ref{sec1}, this result traces back to a theorem of Fine \cite{fine:conn75} that was stated for a modal logic characterised by an \emph{elementary} (i.e.\ first-order definable) class of Kripke frames. In fact closure of the class under ultraproducts suffices for the proof, but this weaker hypothesis does not make the result apply to more logics, or to more varieties in our present context. A variety generated by the algebras associated to an ultraproducts-closed class is also generated likewise by an elementary class:

\begin{theorem}
In a canonicity framework, for any ultraproducts-closed class $\cS$ the canonical variety $\V=\mathbf{HSP}\cS^+$ is equal to 
$\mathbf{HSP}\cS_\mathrm{el}^{\ +}$, where $\cS_\mathrm{el}$ is the smallest elementary class containing $\cS$.
\end{theorem}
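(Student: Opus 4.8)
The plan is to show the two varieties coincide by proving containment in each direction, the easy direction being immediate and the substantive direction requiring a back-and-forth with ultraproducts and elementary equivalence.

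First I would observe that since $\cS\sub\cS_\mathrm{el}$ we have $\cS^+\sub\cS_\mathrm{el}^{\ +}$, hence $\mathbf{HSP}\cS^+\sub\mathbf{HSP}\cS_\mathrm{el}^{\ +}$ trivially. So the work is all in the reverse inclusion $\mathbf{HSP}\cS_\mathrm{el}^{\ +}\sub\mathbf{HSP}\cS^+=\V$. For this it suffices to show that $Q^+\in\V$ for every $Q\in\cS_\mathrm{el}$, since $\V$ is a variety and hence closed under $\mathbf{H}$, $\mathbf{S}$, $\mathbf{P}$. The key structural fact I would invoke is the classical characterization of the smallest elementary class containing $\cS$: by a standard model-theoretic result (Frayne's theorem / the $\mathbf{SPU}$-characterization of elementary classes — see e.g.\ \cite{chan:mode73}), $Q$ lies in the smallest elementary class containing $\cS$ precisely when $Q$ is elementarily equivalent to some ultraproduct $\prod_U P_i$ of structures $P_i\in\cS$; equivalently, $Q$ embeds elementarily into an ultrapower of such an ultraproduct, so that $Q$ has an ultrapower $Q^U$ that is \emph{isomorphic} to an ultrapower of a member of $\cS$. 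Since $\cS$ is closed under ultraproducts, that member-of-$\cS$ ultrapower is itself in $\cS$; call it $P\in\cS$, so $Q^U\cong P^U$ for suitable ultrafilters (after equalizing exponents, as one does in these arguments).

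Next I would run the same chain of embeddings as in the proof of Theorem \ref{canframe}, but now applied to $Q$ rather than to an ultraproduct of the $P_i$. By (A3) there is an ultrafilter $U$ with $(Q^+)^\sg\mono (Q^U)^+$. Now $Q^U\cong P^U$ for some $P\in\cS$ (using that an ultrapower of $Q$ is isomorphic to a member of $\cS$, by the previous paragraph together with closure of $\cS$ under ultraproducts), so $(Q^U)^+\cong (P^U)^+$, and $P^U\in\cS$ since $\cS$ is closed under ultrapowers; therefore $(P^U)^+\in\cS^+\sub\V$. Chaining the embeddings gives $(Q^+)^\sg\mono (P^U)^+\in\V$, and as $\V$ is closed under subalgebras we get $(Q^+)^\sg\in\V$. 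Since $\V$ is $(-)^\sg$-closed by Theorem \ref{canframe}, one might worry we have only placed the canonical extension in $\V$; but in fact we want $Q^+$ itself in $\V$, which follows even more directly: $Q^+\mono (Q^+)^U\cong$ (an ultrapower of $Q^+$) — here I would instead note that $Q^+\mono (Q^U)^+$ by (A2) applied in the ultrapower case, or simply that $Q^+$ embeds into its own ultrapower which, by the isomorphism $Q^U\cong P^U$ transported through $(-)^+$ and the embedding of (A2), lands inside $(P^U)^+\in\V$; closure under $\mathbf{S}$ then gives $Q^+\in\V$. This yields $\cS_\mathrm{el}^{\ +}\sub\mathbf{S}\mathbf{P}_U\cS^+\sub\V$, hence $\mathbf{HSP}\cS_\mathrm{el}^{\ +}\sub\V$, completing the equality.

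The main obstacle I anticipate is bookkeeping at the model-theoretic interface: making precise that ``$Q\in\cS_\mathrm{el}$'' delivers an \emph{isomorphism} between an ultrapower of $Q$ and a genuine member of $\cS$ (not merely an elementary embedding), which requires care about choosing compatible index sets and ultrafilters so that ultrapowers of elementarily equivalent structures can be matched up (the Keisler–Shelah isomorphism theorem, or Frayne's lemma, is the tool). A secondary, purely formal point is that the operation $(-)^+$ need not preserve elementary equivalence or ultrapowers on the nose — so I must route everything through \emph{isomorphisms} of structures (which $(-)^+$ does respect) and the embeddings supplied by axioms (A2) and (A3), never through raw elementary equivalence of the structures after applying $(-)^+$. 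Once those two points are handled, the argument is a direct re-use of the embedding chain from the proof of Theorem \ref{canframe}.
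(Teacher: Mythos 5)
Your proposal is correct and follows essentially the same route as the paper: reduce to showing $Q^+\in\V$ for $Q\in\cS_\mathrm{el}$, use ultraproduct-closure of $\cS$ to find a member of $\cS$ elementarily equivalent to $Q$, apply Keisler--Shelah to get an isomorphism of ultrapowers, and then chain $Q^+\mono(Q^+)^U\mono(Q^U)^+\cong(P^U)^+\in\cS^+\sub\V$ via (A2) and closure under subalgebras. The brief detour through (A3) and $(Q^+)^\sg$ is unnecessary, as you yourself note before settling on the argument the paper actually uses.
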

\begin{proof}
Since $\cS\sub\cS_\mathrm{el}$, it suffices to show that $\cS_\mathrm{el}^{\ +}\sub\V$ to conclude that 
$\V=\mathbf{HSP}\cS_\mathrm{el}^{\ +}$. So let $P\in\cS_\mathrm{el}$.
As $\cS$ is closed under ultraproducts, there is some $P_1\in\cS$ such that $P\equiv P_1$, i.e.\ $P$ and $P_1$ satisfy the same first-order sentences  \cite[4.1.12]{chan:mode73}.
By the Keisler--Shelah Isomorphism Theorem \cite[6.1.15]{chan:mode73}, there is an ultrafilter $U$ such that $P^U\cong P_1^U$. Now   $(P_1^U)^{+}\in \cS^+$, as $\cS$ is closed under ultrapowers, and $(P^U)^{+}\cong (P_1^U)^{+}$, so 
$(P^U)^{+}\in\V$. But by (A2), $(P^+)^U\mono (P^U)^+$, so then $(P^+)^U\in\V$. As $P^+$ and its ultrapower $(P^+)^U$ satisfy the same equations, this implies $P^+\in\V$, showing that $\cS_\mathrm{el}^{\ +}\sub\V$ as required.
\end{proof}

Concerning the converse of Theorem \ref{canframe}, it has been shown \cite{gold:erdo04,gold:cano04} that there are many canonical varieties of BAO's that are not \emph{elementarily generated}, i.e.\ are not generated by the complex algebras of any elementary class of relational structures.  In \cite{gold:fine16} there is a structural analysis that illuminates the difference between canonical closure and elementary generation of a variety $\V$ of BAO's. To explain this briefly, recall from Section \ref{sec1} that any BAO  $\B$ has an associated canonical structure $\B_+$ whose complex algebra $(\B_+)^+$ is a canonical extension of $\B$. Now fix $\B$ to be the free algebra in $\V$ on $\om$-many generators. Then by the reasoning of Theorem \ref{equivalences} above, $\V$ is canonical iff
\begin{equation} \label{comp1}
\text{$((\B^U)_+)^+\in\V$  for all ultrafilters $U$.  }
\end{equation}
On the other hand it can be shown that $\V$ is elementarily generated iff
\begin{equation}   \label{comp2}
\text{$((\B_+)^U)^+\in\V$ for all ultrafilters $U$  }
\end{equation}
(\cite[11.5]{gold:math93} and\cite[3.4]{gold:fine16} give model-theoretic versions of this).
Comparing \eqref{comp1} and \eqref{comp2}, we see that a distinction between canonicity and elementary generation must be reflected in a  failure of the formation of canonical structures to commute with ultrapowers. The structures $(\B_+)^U$ and $(\B^U)_+$ cannot always be isomorphic. They need not have any property that would force $((\B_+)^U)^+$ to belong to $\V$ whenever $((\B^U)_+)^+$ does. In \cite{gold:fine16} an example is given in which $(\B_+)^U$ has size $2^\om$ while $(\B^U)_+$ has size $2^{2^\om}$.

A natural next step for the line of work of the present paper would be to apply Theorem \ref{canframe} to lattices with additional operations, or even to poset-based algebras. There is not yet a  theory of operations on the stable set  lattices of polarities that is as general as the construction by J\'onsson and Tarski \cite{jons:bool51} of $n$-ary operations on Boolean set algebras from $n+1$-ary relations. But there has been extensive work on the expansion of a polarity $P$ by ternary relations, as subsets of $X\times X\times Y$ and $X\times Y\times Y$, that are used to define residuated binary operations on $P^+$  which  model  various connectives in substructural logics \cite{dunn:cano05,gehr:gene06,cher:gene12,coum:rela14}.  Also in \cite{conr:cate16,conr:algo16} there are expansions of $P$ by binary relations, on $X$ and on $Y$ and from $X$ to $Y$ and $Y$ to $X$, that are used to model various unary modalities. In these investigations the operations induced on stable set lattices by the additional relations are first-order definable over the expanded polarity structure, and our framework can be adapted to that setting.
We leave that adaptation to another article.

In conclusion it is worth emphasising that almost everything we have done with canonical extensions in this paper has followed from their abstract description as dense and compact completions, without any reference to the particular nature of their elements. This is in keeping with an approach that J\'onsson favoured, and a reflection of the perspective that he expressed in another conference abstract \cite{jons:role92} as follows:
\begin{quote}\em
Many fundamental concepts of algebra can be formulated in a very general setting, and important results that were originally proved for special classes of algebras are actually true under quite weak assumptions. In fact, such results and their proofs often appear more natural when stripped of irrelevant assumptions.
\end{quote}



\bibliographystyle{spmpsci}


\end{document}